\DeclareSymbolFont{cyrletters}{OT2}{wncyr}{m}{n}
\DeclareMathSymbol{\Sha}{\mathalpha}{cyrletters}{"58}
\theoremstyle{thmstyleone}
\newtheorem{thm}{Theorem}[section]
\newtheorem{lem}[thm]{Lemma}
\newtheorem{cor}[thm]{Corollary}
\theoremstyle{definition}
\newtheorem{eg}[thm]{Example}
\newtheorem{rem}[thm]{Remark}
\newtheorem{ques}[thm]{Question}
\newtheorem{defn}[thm]{Definition}
\numberwithin{equation}{section}
\newcommand{\ZZ}{{\mathbb Z}}
\newcommand{\QQ}{{\mathbb Q}}
\newcommand{\NN}{{\mathbb N}}
\newcommand{\afk}{\mathfrak{a}}
\newcommand{\sfk}{\mathfrak{s}}
\newcommand{\Fcal}{\mathcal{F}}
\newcommand{\Hcal}{\mathcal{H}}
\newcommand{\Tcal}{\mathcal{T}}
\newcommand{\Dcal}{\mathcal{D}}
\newcommand{\Xcal}{\mathcal{X}}
\newcommand{\Ycal}{\mathcal{Y}}
\newcommand{\pr}{\operatorname{pr}}
\begin{document}

\title[Arborified MZV]{A map between arborifications of multiple zeta values}

\author{Ku-Yu Fan}

\address{Graduate School of Mathematics, Nagoya University, Furo-cho, Chikusa-ku, Nagoya, 464-8602, Japan.}
\email{ku-yu.fan.d2@math.nagoya-u.ac.jp }

\date{\today}


\begin{abstract}
Arborified multiple zeta values are a generalization of multiple zeta values associated with rooted trees. There are two types of decorated rooted trees, corresponding respectively to the series and the integral expressions. Manchon introduces the contracting arborification (resp. the simple arborification), which is maps from the BCK Hopf algebras of  the decorated rooted trees corresponding to the series expression (resp. the integral expression) to the non-commutative polynomial algebras of the set $\NN$ (resp. the set $\{0,1\}$). There is a natural map between the two non-commutative polynomial algebras. Manchon posed the question of finding a natural map between the two BCK Hopf algebras that would make the diagram commutative. In this paper, we consider planar rooted trees and use a recursive method to construct such a map between the two BCK Hopf algebras, making the diagram commutative.
\end{abstract}

\keywords{}


\maketitle

\tableofcontents

\section{Introduction}\label{Introduction}
The multiple zeta values (MZVs) are the real numbers defined by the series
$$\zeta(k_1, \ldots, k_d) = \sum_{0<n_1<\cdots<n_d} \frac{1}{n_1^{k_1} \cdots n_d^{k_d}},$$
where $k_1,\ldots,k_{d-1}\in \ZZ_{>0}$ and $k_d\in \ZZ_{>1}$ for convergence of the series.

One important property of MZVs is their iterated integral representation.
$$\zeta(k_1, \ldots, k_d) = (-1)^d I(0;1,\{0\}^{k_1-1},\ldots,1,\{0\}^{k_d-1}; 1),$$
where
\begin{equation}\label{iter.}
  I(a_0; a_1, \ldots, a_k; a_{k+1}) = \int_{a_0<t_1<\cdots<t_k<a_{k+1}} \prod_{j=1}^{k}\frac{dt_j}{t_j-a_j}.
\end{equation}

Yamamoto \cite{Yamamoto2014MZV} introduced an integral associated with 2-posets, known as Yamamoto’s integral, which generalizes the iterated integral.
It is associated with the Hasse diagram of 2-posets (with black and white colorings), where the black and white vertices correspond to the differential form $\frac{dt}{1-t}$ and $\frac{dt}{t}$ respectively. For example,
\begin{equation}\label{Yamamoto’s integral}
  I \left(\begin{xy}
{(0,-2) \ar @{{*}-o} (4,2)},
{(4,2) \ar @{o-{*}} (8,-2)}
\end{xy}\right) = \int_{\substack{0<t_i<1 \\ t_1<t_2>t_3}}\frac{dt_1}{1-t_1}\frac{dt_2}{t_2}\frac{dt_3}{1-t_3}.
\end{equation}

Manchon \cite{AMZV} introduced two kinds of multiple zeta values associated with rooted trees, called arborified multiple zeta value of the first and the second kind. Arborified multiple zeta values of the first kind (cf. Definition \ref{defn arborified multiple zeta values of the first kind}) are defined for a rooted tree decorated with integers. For example,
$$\zeta\left(\begin{xy}
    {(-4,-2)*++!R{\scriptstyle k_1} \ar @{{*}-{*}} (0,2)*++!D{\scriptstyle k_2}},
    {(0,2) \ar @{{*}-{*}} (4,-2)*++!L{\scriptstyle k_3}}
  \end{xy}\right) = \sum_{\substack{0<n_i \\ n_1<n_2>n_3}} \frac{1}{n_1^{k_1}n_2^{k_2}n_3^{k_3}}.$$
Arborified multiple zeta values of the second kind (cf. Definition \ref{defn arborified multiple zeta values of the second kind}) are defined for a rooted tree decorated with 0 and 1. The example \eqref{Yamamoto’s integral} of Yamamoto’s integral is considered as an instance of the first kind, where the white vertex is treated as the root.

Manchon \cite{AMZV} considered more general $\Dcal$-decorated rooted trees which form Butcher-Connes-Kreimer Hopf algebra $\Hcal_{BCK}^\Dcal$. In the case of multiple zeta values, the set $\Dcal$ is taken to be $\Ycal \coloneqq \{y_n\,|\,n\in \NN\}$ for the first kind or $\Xcal \coloneqq \{x_0, x_1\}$ for the second kind. He also considered the simple arborification and contracting arborification, both of which are Hopf algebra morphisms (cf. Theorem \ref{thm arborification}, Definition \ref{defn arborification})
$$\afk_\Xcal : \Hcal_{BCK}^\Xcal \rightarrow \QQ\langle\Xcal\rangle \text{ and } \afk_\Ycal : \Hcal_{BCK}^\Ycal \rightarrow \QQ\langle\Ycal\rangle$$
respectively, where $\QQ\langle\Dcal\rangle$ (with $\Dcal = \Xcal$ or $\Ycal$) is the non-commutative polynomial algebra generated by $\Dcal$. Let $\sfk : \QQ\langle\Ycal\rangle \rightarrow \QQ\langle\Xcal\rangle$ be the algebra morphism defined by
$$\sfk(y_{n_1}\cdots y_{n_d}) = x_1x_0^{n_1-1}\cdots x_1x_0^{n_d-1}.$$
Manchon \cite{AMZV} posed a question of finding a natural map $\sfk^T$ with respect to the tree structures, such that the following diagram commutes:
\begin{equation}\label{diagram}
  \begin{tikzcd}
    \Hcal_{BCK}^\Ycal \arrow[r, "\sfk^T"] \arrow[d, "\afk_\Ycal"'] & \Hcal_{BCK}^\Xcal \arrow[d, "\afk_\Xcal"] \\
    \QQ\langle\Ycal\rangle \arrow[r, "\sfk"]                 & \QQ\langle\Xcal\rangle               
  \end{tikzcd}
\end{equation}

Clavier \cite{DSRforAMZV} introduced a natural map $\sfk^N : \Hcal_{BCK}^\Ycal \rightarrow \Hcal_{BCK}^\Xcal$ which respects tree structures. However, this map does not make the diagram commutative. In fact, he proved that under his natural map $\sfk^N$ the arborified multiple zeta values of the second kind are less than or equal to the arborified multiple zeta values of the first kind (cf. Theorem \ref{thm Clavier}).

In this paper, building on the work of Foissy \cite{arborification1,arborification2,NBCKHopfalg}, we generalize Manchon's question and Clavier's map to the case of planar rooted trees. We construct the linear map $\phi$ and define $\sfk^{PT}$ as the composition of $\sfk^{PN}$ and $\phi$ and show the following theorem.
\begin{thm}[= Theorem \ref{thm Main theorem}]
  The following diagram is commutative.
  $$\begin{tikzcd}
   & \Hcal_{NBCK}^{P\Ycal} \arrow[rd, "\sfk^{PN}"] & \\
  \Hcal_{NBCK}^{P\Ycal} \arrow[rr, "\sfk^{PT}"] \arrow[ru, "\phi"] \arrow[d, "\afk_{P\Ycal}"'] & & \Hcal_{NBCK}^{P\Xcal} \arrow[d, "\afk_{P\Xcal}"] \\
  \QQ\langle\Ycal\rangle \arrow[rr, "\sfk"]               &  & \QQ\langle\Xcal\rangle               
  \end{tikzcd}$$
\end{thm}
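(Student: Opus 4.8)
The plan is to split the diagram into the upper triangle and the lower square. The upper triangle commutes \emph{by definition}, since $\sfk^{PT}$ is defined to be $\sfk^{PN}\circ\phi$; so the content of the theorem is the commutativity of the lower square, i.e.\ the identity
$$\afk_{P\Xcal}\circ\sfk^{PN}\circ\phi \;=\; \sfk\circ\afk_{P\Ycal}$$
of linear maps $\Hcal_{NBCK}^{P\Ycal}\to\QQ\langle\Xcal\rangle$. Since the planar forests form a linear basis of $\Hcal_{NBCK}^{P\Ycal}$, it suffices to check this on an arbitrary planar forest; and one cannot reduce further to single planar trees, because $\sfk$, $\sfk^{PN}$, $\afk_{P\Xcal}$ and $\afk_{P\Ycal}$ respect the relevant products whereas $\phi$ is merely linear --- it has to create, already on a concatenation of two trees, the contraction terms that $\sfk^{PN}$ alone cannot see, and this non-multiplicativity is precisely what repairs the defect of Clavier's map recorded in Theorem \ref{thm Clavier}. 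So I would argue by induction on the total number of vertices of a planar forest.

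The base case is a one-vertex forest decorated by $y_n$: here $\phi$ acts trivially, $\sfk^{PN}$ produces the caterpillar consisting of a root decorated $x_1$ carrying a chain of $n-1$ vertices decorated $x_0$, whose simple arborification is the single word $x_1x_0^{n-1}=\sfk(y_n)$, while $\afk_{P\Ycal}$ of the one-vertex forest is $y_n$; so both sides equal $x_1x_0^{n-1}$. For the inductive step I would write the planar forest as a product $t_1\cdots t_m$ of planar trees, $t_\ell=B_+^{y_{a_\ell}}(G_\ell)$, and substitute into each side the recursive descriptions established earlier in the paper (in Foissy's non-commutative BCK framework). On the right, $\afk_{P\Ycal}$ expands, through its behavior under concatenation and under the grafting operators $B_+^{y_n}$, into a sum indexed by the relevant increasing surjections of the planar poset of the forest, in which a fiber of total decoration $w$ contributes the letter $y_w$; then $\sfk$, being multiplicative for concatenation and sending $y_w$ to $x_1x_0^{w-1}$, turns this into a sum of words in $\QQ\langle\Xcal\rangle$. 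On the left, $\phi$ first expands the forest --- and, after one passes inside a graft, each branch forest $G_\ell$ --- as a sum over the ways of contracting planarly consecutive subtrees into single vertices whose decorations are the corresponding sums; $\sfk^{PN}$ then blows up each vertex of decoration $w$ into a length-$w$ caterpillar carrying the images of its children at the top; and $\afk_{P\Xcal}$ reads off the resulting planar tree in its planar order as a single word. Comparing, one is left with two sums of words in $\QQ\langle\Xcal\rangle$ to be matched, the portions of the trees lying above a contracted part being governed by the inductive hypothesis applied to the strictly smaller $G_\ell$.

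The heart of the proof, and the step I expect to be the main obstacle, is this term-by-term matching. The natural bijection sends an increasing surjection of the planar poset to the contraction of $\phi$ whose groups are exactly its fibers; it is compatible with the passage to $\QQ\langle\Xcal\rangle$ because $\sfk$ turns a fiber of decoration $w$ into $x_1x_0^{w-1}$, which is exactly the caterpillar spine that $\sfk^{PN}$ grows from the corresponding contracted vertex, and because the order in which the contracted groups are read off by $\afk_{P\Xcal}$ coincides with the level order of the fibers. It is here that planarity is indispensable, and this is the reason for working with $\Hcal_{NBCK}$ rather than $\Hcal_{BCK}$: the planar structure forces every admissible fiber to be a planarly consecutive antichain, which is exactly the range of the sum defining $\phi$, so that the correspondence is well defined, bijective, and loses nothing; over the commutative $\Hcal_{BCK}$ the fibers would be arbitrary antichains and no such finitely-described contraction map exists. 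Granting this combinatorial bijection, the identity on an arbitrary planar forest follows, which together with the commutativity of the upper triangle proves the theorem.
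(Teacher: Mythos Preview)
Your proposal rests on a misreading of $\phi$. By Definitions~\ref{defn map sPT} and~\ref{defn error term}, $\phi(Y)=Y+\sum_{v\in V(\pr(Y))}(\delta_{\pr(Y)}(v))^e$, where each error term $(\cdot)^e$ is a \emph{signed} linear combination with binomial-coefficient weights, built recursively from the minimal incomparable pair; it is not ``a sum over the ways of contracting planarly consecutive subtrees into single vertices''. Already for the cherry $Y=B_+^{y_b}(B_+^{y_a}(\emptyset)\,B_+^{y_c}(\emptyset))$ of Example~\ref{eg simplest example}, the correction $Y^e$ contains negative terms weighted by $\binom{a-i+c-1}{c-1}$ and $\binom{c-i+a-1}{a-1}$. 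Moreover, your proposed bijection fails at the outset: the fibers arising in $\afk_{P\Ycal}$ (i.e.\ the stuffle contractions) are \emph{not} forced to be planarly consecutive antichains. For the three-vertex forest $B_+^{y_a}(\emptyset)\,B_+^{y_b}(\emptyset)\,B_+^{y_c}(\emptyset)$ one has $\afk_{P\Ycal}=y_a\ast y_b\ast y_c$, which contains the monomial $y_by_{a+c}$ whose contracted fiber skips the middle vertex. So neither side of the matching you propose has the shape you ascribe to it, and the term-by-term comparison collapses.

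The paper's argument avoids any global bijection. It uses Lemma~\ref{lem error term} to show that $\afk_{P\Xcal}\circ\sfk^{PT}$ obeys the three-term recursion $Y\mapsto Y_{a+b}+Y_b^a+Y_a^b$ at the minimal incomparable pair $(a,b)$, while Lemma~\ref{lem arborification} shows that $\sfk\circ\afk_{P\Ycal}$ obeys the \emph{same} recursion. Iterating along the process tree $\pr(Y)$ reduces both sides to the same sum over the ladder trees decorating the leaves of $\pr(Y)$, where Lemma~\ref{lem the map sPT} gives equality. Planarity enters only to single out a canonical incomparable pair at each step of this recursion, not to constrain the stuffle fibers.
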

The maps $\afk_{P\Ycal}$, $\sfk^{PN}$, $\afk_{P\Xcal}$ above are the lifts of the maps $\afk_{\Ycal}$, $\sfk^{N}$, $\afk_{\Xcal}$ respectively. Applying this theorem, we can construct a map $\sfk^T$ which makes the diagram (\ref{diagram}) commutative.

In this paper, we review some definitions of rooted trees in \S\ref{section Notations}. In \S\ref{section Arborified multiple zeta values}, we recall two kinds of arborified multiple zeta values and formulate Manchon's question. We prove the above theorem  and present the solution to Manchon’s question in \S\ref{section Main theorem}.

\section{Notations}\label{section Notations}
Arborified multiple zeta values are one of the generalizations of multiple zeta values, which are multiple zeta values associated with rooted trees. In this section, we review the necessary background on rooted trees and define $\Dcal$-decorated rooted trees.

We begin with the definition of a rooted tree.
\begin{defn}\label{defn rooted tree}
  A {\it (non-planar) rooted tree} $T=(T, r)$ is a tree $T = (V(T), E(T))$ in which one vertex $r$ is designated as the {\it root} of the tree $T$, where $V(T)$ denotes the {\it vertex set} of $T$ and $E(T)$ denotes the {\it edge set} of $T$.
\end{defn}

\begin{defn}\label{defn depth function}
  The {\it depth function} $\rho_T$ is a function from $V(T)$ to $\ZZ_{\geq0}$ that maps a vertex $v$ to the length of the path from $r$ to $v$.
\end{defn}

We then recall the definition of a planar rooted tree.
\begin{defn}\label{defn planar rooted tree}
  A {\it planar rooted tree} $T=(T, r, \alpha_T)$ is defined as a rooted tree $(T, r)$ and a {\it total order relation} $\alpha_T \subset V(T) \times V(T)$ on the vertex set $V(T)$ which satisfies
  \begin{enumerate}[(i)]
    \item $\forall u, v\in V(T)$ $\rho_T(u) < \rho_T(v) \Rightarrow (u, v)\in \alpha_T$,
    \item If $\{u, v\}, \{x, y\}$ in $E(T)$, $\rho_T(u)=\rho_T(x)=\rho_T(v)-1=\rho_T(y)-1$ and $(u, x)\in \alpha_T$, then $(v, y)\in \alpha_T$.
  \end{enumerate}
\end{defn}

In this paper, we adopt the opposite tree order because our definition of multiple zeta values follows a different convention from that of  Manchon \cite{AMZV}.
\begin{defn}\label{defn opposite tree order}
  The {\it opposite tree order} of a rooted tree $T$ is a partial order relation $\preceq_T \subset V(T) \times V(T)$ on $V(T)$ defined by $(u, v)\in \preceq_T$ if and only if the path from $r$ to $u$ contains the path from $r$ to $v$. Similarly, for every planar rooted tree $T$, we define the opposite tree order of $T$ in the same way.
\end{defn}

\begin{defn}\label{defn leaf}
  Let $T$ be a rooted tree. A vertex $v$ in $V(T)$ is called a {\it leaf} if $v$ is minimal under the opposite tree order. {\it The set of leaves} of $T$ is denoted by $\text{leaf}(T)$. Similarly, for every planar rooted tree $T$, we define the leaf and the set of leaves of $T$ in the same way.
\end{defn}

The following definition will be needed in Definition \ref{defn error term}.
\begin{defn}\label{defn minimal incomparable pair}
  Let $T$ be a planar rooted tree. Since the vertex set $V(T)$, equipped with the total order relation $\alpha_T$, is totally ordered, the lexicographically ordered set $V(T)\times V(T)\setminus \preceq_T$ is also totally ordered. The pair $(a, b)$ is called the {\it minimal incomparable pair} of $T$ if $(a, b)$ is minimal element of $V(T)\times V(T)\setminus \preceq_T$ with respect to the lexicographic order.
\end{defn}

We adopt the following definition of a forest, which differs from the standard one but is more convenient for our purposes.
\begin{defn}\label{defn forest}
  A {\it forest} $F$ of rooted trees (resp. planar rooted trees) is constructed by
  \begin{enumerate}[step 1:]
    \item Removing the root $r$ from a rooted tree (resp. planar rooted tree) $T$.
    \item For each vertex $v$ with $\rho_T(v) = 1$, designate $v$ as the root of its connected component in the graph $T\setminus \{r\}$.
  \end{enumerate}
  Note that a forest of planar rooted trees inherits a total order relation.
\end{defn}

\begin{rem}\label{rem forest}
  A forest $F$ of rooted trees is a disjoint union of rooted trees. A forest $F$ of planar rooted trees is a disjoint union of rooted trees with a total order relation $\alpha \subset V(F) \times V(F)$. We denote a forest $F$ of rooted trees by $T_1\cdots T_n$, and let $r_i$ be the root of $T_i$. In the case of planar rooted trees, we require that $(r_i, r_j)\in \alpha$ when $i\leq j$.
\end{rem}

\begin{defn}[\cite{CK}, Equation (44), (45), (46)]\label{defn operator}
  Let $\Tcal$ be the set of all rooted trees $T$ (resp. planar rooted tree), and let $\Fcal$ be the set of all rooted forests $F$ (resp. planar rooted forest).
  \begin{enumerate}
    \item  The {\it pruning operator} $B_- : \Tcal \to \Fcal$ is a map that sends a rooted tree $T$ (resp. planar rooted tree) to a rooted forest $F = T_1\cdots T_n$ (resp. planar rooted forest) by removing the root $r$ from the tree:
    $$B_-(T)=T_1\cdots T_n.$$
    \item The {\it grafting operator} $B_+ : \Fcal \to \Tcal$ is a map that sends any rooted forest $F = T_1\cdots T_n$ (resp. planar rooted forest) to a rooted tree $T$ (resp. planar rooted tree) by grafting all components onto the common root:
    $$B_+(T_1\cdots T_n)=T.$$
  \end{enumerate}
  Note that the pruning operator $B_-$ and the grafting operator $B_+$ satisfy the following:
  $$B_-(B_+(T_1\cdots T_n)) = T_1\cdots T_n \text{ and } B_+(B_-(T)) = T.$$
\end{defn}

\begin{defn}\label{defn ladder tree}
  A {\it ladder tree} $T$ (resp. ladder forest $F$) is a tree $T$ (resp. forest $F$) which has no branching vertex (i.e. it has no vertex with degree greater than 2). Similarly, for every planar rooted tree $T$, the ladder planar tree $T$ (resp. ladder planar forest $F$) is defined in the same way.
\end{defn}

We now define a decorated rooted tree, which is the main object of study in this paper.
\begin{defn}\label{defn decorated rooted tree}
  Let $\Dcal$ be a set. A {\it $\Dcal$-decorated rooted tree (resp. forest)} is a rooted tree $D$ (resp. forest $F_D$) equipped with a decoration map
  $$\delta_D : V(D) \to \Dcal \text{ (resp. } \delta_{F_D} : V(F_D) \to V(F_D).$$
  Similarly, for every set $\Dcal$, we define the $\Dcal$-decorated planar rooted tree (resp. forest) in the same way.
\end{defn}

\begin{rem}
  In this paper, we study the case where the set $\Dcal$ is either $\Xcal$ or $\Ycal$, where  $\Xcal \coloneqq \{x_0, x_1\}$ and $\Ycal \coloneqq \{y_n\,|\,n\in \NN\}$.
\end{rem}

\section{Arborified multiple zeta values}\label{section Arborified multiple zeta values}
We begin by recalling two kinds of arborified multiple zeta values and the corresponding Hopf algebras. Next, we introduce morphisms between the relevant spaces. Based on these definitions, we present a question posed by Manchon, and finally review Clavier’s answer to it.
\begin{defn}\label{defn arborified multiple zeta values of the first kind}
  {\it Arborified multiple zeta values of the first kind} are multiple zeta values associated with a $\Ycal$-decorated rooted tree $Y$ (resp. forest), defined as the harmonic series associated to the triple $(V(Y), \preceq_Y, \delta_Y)$.
  $$\zeta(Y) \coloneqq \sum_{\substack{n_v\in \NN\\n_u<n_v\ \text{if}\ u\prec_Y v}} \prod_{v\in V(Y)} \frac{1}{n_v^{k_v}},$$
  where $k_v$ is the integer $n$ such that $\delta_Y(v) = y_n$.
  The series converges when the root is decorated by $y_n$ for some $n\geq 2$. Similarly, for a $\Ycal$-decorated planar rooted tree (resp. forest), we define the arborified multiple zeta values of the first kind in the same way. The corresponding series also converges when the root is decorated by $y_n$ with $n\geq 2$.
\end{defn}

\begin{defn}\label{defn arborified multiple zeta values of the second kind}
  {\it Arborified multiple zeta values of the second kind} are multiple zeta values associated with a $\Xcal$-decorated rooted tree $X$ (resp. forest), defined as Yamamoto's integral associated with the triple $(V(X), \preceq_X, \delta_X)$.
  $$\zeta(X) \coloneqq I(X) = \int_{\Delta(X)} \prod_{v\in V(X)}\omega_{\delta_X(v)}(t_v),$$
  where
  $$\Delta(X) \coloneqq \{\mathbf{t} = (t_v)_{v\in V(X)} \in (0,1)^{V(X)} \,|\, t_u < t_v \text{ if } u\prec_X v\},$$
  $$\omega_{x_0}(t) \coloneqq \frac{dt}{t},\ \omega_{x_1}(t) \coloneqq \frac{dt}{1-t}.$$
  The integral converges when the root is decorated by $x_0$ and the leaves are decorated by $x_1$. Similarly, for a $\Xcal$-decorated planar rooted tree (resp. forest), we define the arborified multiple zeta values of the second kind in the same way. The corresponding integral also converges when the root is decorated by $x_0$ and the leaves are decorated by $x_1$.
\end{defn}

The following Hopf algebra structures and algebra morphisms play an important role in the theory of MZVs.

\begin{defn}[\cite{Hopfalg}]\label{defn Hopf algebra}
  \begin{enumerate}[(i)]
    \item Let $\QQ\langle\Xcal\rangle$ be the non-commutative polynomial algebra generated by $\Xcal$.\\
    The triple $(\QQ\langle\Xcal\rangle, \shuffle, \Delta)$ is a graded commutative Hopf algebra, where $\shuffle$ denotes the shuffle product, and $\Delta$ is the coproduct.
    \item Let $\QQ\langle\Ycal\rangle$ be the non-commutative polynomial algebra generated by $\Ycal$.\\
    The triple $(\QQ\langle\Ycal\rangle, \ast, \Delta)$ is a graded commutative Hopf algebra, where $\ast$ denotes the stuffle product, and $\Delta$ is the coproduct.
  \end{enumerate}
\end{defn}

\begin{defn}\label{defn map s}
  The algebra morphism $\sfk : \QQ\langle\Ycal\rangle \rightarrow \QQ\langle\Xcal\rangle$ is defined by
  $$\sfk(y_{n_1}\cdots y_{n_d}) = x_1x_0^{n_1-1}\cdots x_1x_0^{n_d-1}.$$
\end{defn}

Manchon introduced the following Hopf algebra structure on $\Dcal$-decorated rooted trees.

\begin{defn}[\cite{CKHopfalg}, \S 5 and \cite{AMZV},\S 3]\label{defn HBCKD}
  Let $\Dcal$ be a set. Let $\QQ[\Tcal^\Dcal]$ be the commutative polynomial algebra, where $\Tcal^\Dcal$ is the set of non-empty $\Dcal$-decorated rooted trees. {\it Butcher-Connes-Kreimer Hopf algebra} (BCK Hopf algebra) of $\Dcal$-decorated rooted trees $\Hcal_{BCK}^{\Dcal}$ is defined as the triple $(\QQ[\Tcal^\Dcal], \pi, \Delta)$ which is a graded non-commutative Hopf algebra with the product $\pi$ and the coproduct $\Delta$ (see \cite{AMZV}, Equation (18)).
\end{defn}

In this paper, we consider the case of $\Dcal$-decorated planar rooted tree.

\begin{defn}[\cite{arborification1}, \S 5 and \cite{NBCKHopfalg}, \S 1]\label{defn kTPD}
  Let $\Dcal$ be a set. Let $\QQ\langle \Tcal^{P\Dcal} \rangle$ be the non-commutative polynomial algebra, where $\Tcal^{P\Dcal}$ is the set of non-empty $\Dcal$-decorated planar rooted trees. {\it non-commutative Butcher-Connes-Kreimer Hopf algebra} (NBCK Hopf algebra) of $\Dcal$-decorated planar rooted trees $\Hcal_{NBCK}^{P\Dcal}$ is defined as the triple $(\QQ\langle \Tcal^{P\Dcal} \rangle, \pi, \Delta)$ which is a graded non-commutative Hopf algebra with the product $\pi$ and the coproduct $\Delta$ (see \cite{arborification1}, Theorem 29).
\end{defn}

The only distinction between a planar rooted tree and a rooted tree lies in the presence of a total order relation $\alpha$. Hence, we have the following natural projection:

\begin{defn}\label{defn projection}
  The natural projection from $\QQ\langle \Tcal^{P\Dcal} \rangle$ to $\QQ[\Tcal^\Dcal]$ by removing the total order relation is denoted by $\widehat{\alpha}_\Dcal$, which is an algebra morphism.
\end{defn}

We give the following definition of grafting operator for the $\Dcal$-decorated rooted trees and the $\Dcal$-decorated planar rooted trees.

\begin{defn}[\cite{arborification2}, \S 1]\label{defn grafting operator}
  Let $\Dcal$ be a set, and $d$ be an element in $\Dcal$. The {\it grafting operator} $B_+^d : \Hcal_{BCK}^{\Dcal} \to \Hcal_{BCK}^{\Dcal}$ (resp. $B_+^d : \Hcal_{NBCK}^{P\Dcal} \to \Hcal_{NBCK}^{P\Dcal}$) is an algebra morphism that maps any $\Dcal$-decorated rooted forest (resp. $\Dcal$-decorated planar rooted forest) to a $\Dcal$-decorated rooted tree (resp. $\Dcal$-decorated planar rooted tree) by grafting all components onto the common root decorated by $d$.
\end{defn}

We use the same notation $B_-$ for the pruning operator of the $\Dcal$-decorated rooted trees and the $\Dcal$-decorated planar rooted trees. Using the notation of the minimal incomparable pair (cf. Definition \ref{defn minimal incomparable pair}) for a planar rooted tree, we obtain the following lemma.

\begin{lem}\label{lem expression of planar rooted tree}
  Let $D$ be a $\Dcal$-decorated planar rooted tree. Then $D$ has the following expression:
  $$D = B_+^{d_1} \circ \cdots \circ B_+^{d_s}(B_+^{d_a}(F_a)\,B_+^{d_b}(F_b)\,F)$$
  for some $\Dcal$-decorated planar forest $F, F_a, F_b$, where $(a, b)$ is the minimal incomparable pair of $D$ and $d_{a} = \delta_D(a), d_{b} = \delta_D(b)$.
\end{lem}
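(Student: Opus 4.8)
The plan is to induct on the number of vertices $|V(D)|$. If $V(D)\times V(D)\setminus \preceq_D$ is empty, then $\preceq_D$ is a total order, which forces $D$ to be a ladder planar tree; in that case $D = B_+^{d_1}\circ\cdots\circ B_+^{d_s}(\emptyset)$ after absorbing everything into the ladder, and there is nothing to prove (the claimed decomposition degenerates with $F_a,F_b,F$ empty, or one simply treats this as the base case where no minimal incomparable pair exists). So assume $D$ has at least one incomparable pair, and let $(a,b)$ be the minimal incomparable pair of $D$ with respect to the lexicographic order on $V(D)\times V(D)$ coming from $\alpha_D$.

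First I would analyze the structure near the root. Write $r$ for the root of $D$. If $r$ is not a branching vertex, then $B_-(D)$ is a single planar rooted tree $D'$, and since $r$ is comparable to every vertex, the minimal incomparable pair of $D'$ is again $(a,b)$; applying the inductive hypothesis to $D'$ and then $B_+^{\delta_D(r)}$ gives the desired expression with $s$ increased by one. Thus the substantive case is when $r$ is a branching vertex, i.e.\ $B_-(D) = T_1\cdots T_n$ with $n\geq 2$, the $T_i$ ordered so that $(r_i,r_j)\in\alpha_D$ for $i\le j$.

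The key claim is then that, in the branching case, $a$ lies in $T_1$ and $b$ lies in $T_2$, and moreover $a$ is the root $r_1$ of $T_1$ and $b$ is the root $r_2$ of $T_2$. Indeed, by condition (i) of Definition \ref{defn planar rooted tree} together with the ordering convention of Remark \ref{rem forest}, every vertex of $T_i$ precedes (in $\alpha_D$) every vertex of $T_j$ for $i<j$, and within a single $T_i$ the pair $(u,v)$ is incomparable under $\preceq_D$ iff it is incomparable under $\preceq_{T_i}$; comparing lexicographically, the smallest incomparable pair must have its first coordinate as small as possible, which is $r_1$ (the $\alpha_D$-minimal vertex overall, since $r$ is excluded), and then its second coordinate as small as possible among vertices incomparable to $r_1$, which is $r_2$. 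Granting this, set $F_a \coloneqq B_-(T_1)$, $F_b \coloneqq B_-(T_2)$, $F \coloneqq T_3\cdots T_n$, and $s=1$, $d_1 = \delta_D(r)$; then $T_1 = B_+^{d_a}(F_a)$ and $T_2 = B_+^{d_b}(F_b)$ by the identity $B_+(B_-(T))=T$, and $D = B_+^{d_1}(T_1 T_2 F)$, which is exactly the asserted form (here $s=1$, so the chain of $B_+$'s has length one).

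The main obstacle I anticipate is the bookkeeping in the key claim: one must carefully unwind how the lexicographic order on $V(D)\times V(D)\setminus\preceq_D$ interacts with the forest decomposition, in particular checking that no incomparable pair internal to some $T_i$ can be lexicographically smaller than $(r_1,r_2)$ (it cannot, since its first coordinate is $\ge r_1$, and if equal to $r_1$ its second coordinate is a proper descendant-direction vertex incomparable to $r_1$, all of which are $\alpha_D$-larger than $r_2$ by conditions (i)–(ii) and the ordering of the $r_i$), and that $(r_1, v)$ for $v \in T_1$ is never incomparable. Everything else is a routine application of the two identities $B_+ \circ B_- = \mathrm{id}$, $B_- \circ B_+ = \mathrm{id}$ from Definition \ref{defn operator} and the inductive hypothesis.
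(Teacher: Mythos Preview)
Your argument is essentially correct and follows the same idea as the paper's proof, which directly observes that minimality of $(a,b)$ forces $\{v\in V(D)\mid (v,a)\in\alpha_D\}$ to be totally ordered under $\preceq_D$, hence a ladder terminating at the parent of $a$; your induction on $|V(D)|$ just peels this ladder off one root at a time.

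One misstatement to fix: it is \emph{not} true that ``every vertex of $T_i$ precedes (in $\alpha_D$) every vertex of $T_j$ for $i<j$''. Condition (i) of Definition~\ref{defn planar rooted tree} forces vertices of smaller depth to precede those of larger depth regardless of which subtree they sit in, so a depth-$3$ vertex of $T_1$ will come $\alpha_D$-after a depth-$2$ vertex of $T_2$. Fortunately your argument does not actually use this claim: what you need, and what does hold, is that $r_1$ is the $\alpha_D$-minimal non-root vertex (by condition (i) and the ordering of the $r_i$), and that $r_2$ is the $\alpha_D$-minimal vertex among those incomparable to $r_1$ (same reasoning, since the vertices incomparable to $r_1$ are exactly $V(T_2)\cup\cdots\cup V(T_n)$ and those of depth $1$ among them are $r_2,\ldots,r_n$). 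With that correction the branching case goes through, and your treatment of the non-branching case and of the identities $B_+\circ B_-=\mathrm{id}$ is fine.
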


\begin{proof}
  Let $D = (V(D), E(D), r, \alpha_D)$ be a $\Dcal$-decorated planar rooted tree, with $(a, b)$ its minimal incomparable pair. By Definition \ref{defn minimal incomparable pair}, the opposite tree order $\preceq_D$, when restricted to
  $$\{v\in V(D) | (v, a)\in \alpha_D\}^2$$
  induces a total order. Hence using the notation in Remark \ref{rem forest}, we obtain the following expression:
  $$D = B_+^{d_1} \circ \cdots \circ B_+^{d_s}(B_+^{d_a}(F_a)\,B_+^{d_b}(F_b)\,F),$$
  which is this lemma.
\end{proof}

\begin{thm}[\cite{arborification1}, Theorem 31 and \cite{AMZV}, Equation (22)]\label{thm arborification}
  Let $\Hcal$ be a graded Hopf algebra, $d$ be an element in $\Dcal$. For any Hochschild one-cocycle $L^d : \Hcal \rightarrow \Hcal$, that is, a linear map satisfying
  \begin{equation}\label{eq one-cocycle condition}
    \Delta(L^d(x)) = L^d(x)\otimes \mathbf{1}_\Hcal + (Id\otimes L^d) \circ \Delta(x).
  \end{equation}
  Then there exists a unique Hopf algebra morphism $\Phi_L : \Hcal_{BCK}^\Dcal \rightarrow \Hcal$ satisfying
  $$\Phi_L \circ B_+^d = L^d \circ \Phi_L.$$
\end{thm}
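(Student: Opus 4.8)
The plan is to construct $\Phi_L$ explicitly by a recursion on the number of vertices, verify by hand that it is a bialgebra morphism, and then invoke the fact that a bialgebra morphism between Hopf algebras automatically commutes with the antipodes. Since $\QQ[\Tcal^\Dcal]$ is the free commutative algebra on the set $\Tcal^\Dcal$ of non-empty $\Dcal$-decorated rooted trees, any algebra map out of it is determined by its values on trees, so I would set $\Phi_L(\mathbf 1) = \mathbf 1_\Hcal$ and, for a tree $T$ with root $r$, $\Phi_L(T) := L^{\delta_T(r)}\bigl(\Phi_L(B_-(T))\bigr)$, where for a forest one puts $\Phi_L(T_1\cdots T_n) := \Phi_L(T_1)\cdots\Phi_L(T_n)$. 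This is well posed: the decomposition $T = B_+^{\delta_T(r)}(B_-(T))$ is unique and the forest $B_-(T)$ has strictly fewer vertices than $T$, so the recursion is well-founded; and by construction $\Phi_L$ is an algebra morphism satisfying $\Phi_L\circ B_+^{d} = L^{d}\circ\Phi_L$ for every $d\in\Dcal$.

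The substantive step is to show $\Phi_L$ is a coalgebra morphism, i.e. $\Delta_\Hcal\circ\Phi_L = (\Phi_L\otimes\Phi_L)\circ\Delta$ (the counit identity being immediate from the grading). I would prove this by induction on the number of vertices of a forest $F$. If $F$ is a nontrivial product of forests of smaller size, the identity follows from the inductive hypothesis together with the fact that $\Delta$, $\Delta_\Hcal$, $\Phi_L$ and $\Phi_L\otimes\Phi_L$ are all algebra morphisms. If $F = T = B_+^{d}(F')$ is a single tree, then, using the one-cocycle identity \eqref{eq one-cocycle condition} for $L^{d}$, the relation $L^{d}(\Phi_L(F')) = \Phi_L(T)$, and the inductive hypothesis applied to $F'$, a direct expansion gives
$$\Delta_\Hcal\bigl(\Phi_L(T)\bigr) = \Phi_L(T)\otimes\mathbf 1_\Hcal + (\mathrm{Id}\otimes L^{d})(\Phi_L\otimes\Phi_L)\Delta(F');$$
on the other hand, using that $B_+^{d}$ is itself a one-cocycle on $\Hcal_{BCK}^\Dcal$ — this is precisely the recursive formula for the BCK coproduct, \cite{AMZV}, Equation (18) — together with $\Phi_L\circ B_+^{d} = L^{d}\circ\Phi_L$, one gets
$$(\Phi_L\otimes\Phi_L)\Delta(T) = \Phi_L(T)\otimes\mathbf 1_\Hcal + (\mathrm{Id}\otimes L^{d})(\Phi_L\otimes\Phi_L)\Delta(F').$$
The two right-hand sides coincide, which closes the induction. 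Hence $\Phi_L$ is a bialgebra morphism between Hopf algebras, and therefore a Hopf algebra morphism.

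For uniqueness, I would note that any Hopf algebra morphism $\Psi : \Hcal_{BCK}^\Dcal \to \Hcal$ with $\Psi\circ B_+^{d} = L^{d}\circ\Psi$ is forced to satisfy $\Psi(\mathbf 1) = \mathbf 1_\Hcal$, $\Psi(T_1\cdots T_n) = \Psi(T_1)\cdots\Psi(T_n)$, and $\Psi(B_+^{d}(F)) = L^{d}(\Psi(F))$; an easy induction on the number of vertices then forces $\Psi = \Phi_L$. I expect the only real obstacle to be the single-tree step of the coalgebra computation: one must carefully align the one-cocycle identity obeyed by $L^{d}$ on the target $\Hcal$ with the one obeyed by the grafting operator $B_+^{d}$ on $\Hcal_{BCK}^\Dcal$, and organize the induction so that the tree case and the forest-product case feed into one another correctly.
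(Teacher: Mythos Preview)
The paper does not give its own proof of this theorem: it is quoted as a known result, with attribution to Foissy \cite{arborification1}, Theorem~31, and Manchon \cite{AMZV}, Equation~(22), and is immediately followed by an example and definitions rather than an argument. So there is no in-paper proof to compare against.

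That said, your proposal is exactly the classical proof one finds in the cited sources (and originally in Connes--Kreimer): define $\Phi_L$ recursively on trees via $\Phi_L(B_+^d(F)) = L^d(\Phi_L(F))$, extend multiplicatively to forests, and check the coalgebra compatibility by induction using that $B_+^d$ satisfies the same one-cocycle identity on $\Hcal_{BCK}^\Dcal$ as $L^d$ does on $\Hcal$. One small caveat worth making explicit in your write-up: since $\Hcal_{BCK}^\Dcal = \QQ[\Tcal^\Dcal]$ is \emph{commutative}, for your definition $\Phi_L(T_1\cdots T_n) = \Phi_L(T_1)\cdots\Phi_L(T_n)$ to be well posed as an algebra morphism you implicitly need the target $\Hcal$ to be commutative (as it is in all the applications in this paper, where $\Hcal = \QQ\langle\Dcal\rangle$ with the shuffle or stuffle product); otherwise one should either state this hypothesis or pass to the planar version $\Hcal_{NBCK}^{P\Dcal}$ as in Foissy's Theorem~31. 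With that hypothesis in place, your argument is complete and matches the literature.
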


\begin{eg}[\cite{AMZV}, \S 4]\label{defn Hochschild one-cocycle}
  Let $\Dcal$ be $\Xcal$ or $\Ycal$ and $d$ be an element in $\Dcal$. Then, the linear map $L^{d} : \QQ\langle\Dcal\rangle \rightarrow \QQ\langle\Dcal\rangle$, which is defined by
  $$L^{d}(d_1\cdots d_s) \coloneqq d_1\cdots d_s d,$$
  is a Hochschild one-cocycle on $\QQ\langle\Dcal\rangle$.
  The one-cocycle condition \eqref{eq one-cocycle condition} for $L^{d}$ follows directly from the structure of the deconcatenation coproduct.
\end{eg}

By the theorem above, Manchon introduced the simple arborification and the contracting arborification as follows:

\begin{defn}[\cite{AMZV}, \S 4]\label{defn arborification}
  \begin{enumerate}[(i)]
    \item The {\it simple arborification} is the unique Hopf algebra morphism
    $$\afk_\Xcal : \Hcal_{BCK}^\Xcal \rightarrow \QQ\langle\Xcal\rangle$$
    satisfying $\afk_\Xcal \circ B_+^{x_m} = L^{x_m} \circ \afk_\Xcal$, where $m = 0$ or $1$.
    \item The {\it contracting arborification} is the unique Hopf algebra morphism
    $$\afk_\Ycal : \Hcal_{BCK}^\Ycal \rightarrow \QQ\langle\Ycal\rangle$$
    satisfying $\afk_\Ycal \circ B_+^{y_n} = L^{y_n} \circ \afk_\Ycal$, where $n \in \ZZ_{>0}$.
  \end{enumerate}
\end{defn}

In the case of planar rooted trees, we introduce the lift of the simple arborification and the contracting arborification as follows:

\begin{defn}\label{defn lift arborification}
   The lifting map $\afk_{P\Xcal} : \Hcal_{NBCK}^{P\Xcal} \rightarrow \QQ\langle\Xcal\rangle$ and $\afk_{P\Ycal} : \Hcal_{NBCK}^{P\Ycal} \rightarrow \QQ\langle\Ycal\rangle$ are defined by $\widehat{\alpha}_\Xcal \circ \afk_\Xcal$ and $\widehat{\alpha}_\Ycal \circ \afk_\Ycal$ (for $\widehat{\alpha}_\Xcal$ and $\widehat{\alpha}_\Ycal$ see Definition \ref{defn projection}), respectively. They are algebra morphisms, called the lifting maps of $\afk_\Xcal$ and $\afk_\Ycal$, respectively.
\end{defn}

The following lemma will be used in the proof of Lemma \ref{lem arborification}.

\begin{lem}\label{lem lambda-shuffle product}
  Let $F$ and $F'$ be the $\Xcal$-decorated rooted forests. Then, the simple arborification $\afk_\Xcal$ satisfies the following equation:
  $$\afk_\Xcal (B_+^{x}(F) \, B_+^{x'}(F')) = L^{x} \circ \afk_\Xcal(B_+^{x'}(F') \, F)) + L^{x'} \circ \afk_\Xcal(B_+^{x}(F) \, F')).$$
  Let $F$ and $F'$ be the $\Ycal$-decorated rooted forest. Then, the contracting arborification $\afk_\Ycal$ satisfies the following equation:
  \begin{align*}
    \afk_\Ycal (B_+^{y_n}(F) \, B_+^{y_{n'}}(F')) =  & L^{y_n} \circ \afk_\Ycal(B_+^{y_{n'}}(F') \, F)) + L^{y_{n'}} \circ \afk_\Ycal(B_+^{y_n}(F) \, F')) \\
      &  + L^{y_{n+n'}} \circ \afk_\Ycal(F \, F')).
  \end{align*}
  The same statement also holds for $\afk_{P\Xcal}$ and $\afk_{P\Ycal}$, i.e., for the case of planar rooted forests.
\end{lem}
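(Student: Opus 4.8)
The plan is to prove the formulas for $\afk_\Xcal$ and $\afk_\Ycal$ directly from the defining cocycle relations $\afk_\Xcal \circ B_+^{x} = L^{x} \circ \afk_\Xcal$ and $\afk_\Ycal \circ B_+^{y_n} = L^{y_n} \circ \afk_\Ycal$, together with the fact that $\afk_\Xcal$ and $\afk_\Ycal$ are algebra morphisms from $\Hcal_{BCK}^\Dcal$ (with its commutative product $\pi$, which is just disjoint union of forests) to $\QQ\langle\Dcal\rangle$ (with shuffle, resp.\ stuffle). First I would write $B_+^{x}(F)\,B_+^{x'}(F')$ as the product in $\Hcal_{BCK}^\Xcal$ of the two trees $B_+^{x}(F)$ and $B_+^{x'}(F')$; applying $\afk_\Xcal$ turns this into the shuffle $\afk_\Xcal(B_+^{x}(F)) \shuffle \afk_\Xcal(B_+^{x'}(F'))$. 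By the cocycle relation this equals $\bigl(L^{x}\afk_\Xcal(F)\bigr) \shuffle \bigl(L^{x'}\afk_\Xcal(F')\bigr)$, i.e.\ $\bigl(w x\bigr) \shuffle \bigl(w' x'\bigr)$ where $w = \afk_\Xcal(F)$, $w' = \afk_\Xcal(F')$. Now I would invoke the standard recursion for the shuffle product: $(wx)\shuffle(w'x') = \bigl((wx)\shuffle w'\bigr)x' + \bigl(w \shuffle (w'x')\bigr)x = L^{x'}\bigl((wx)\shuffle w'\bigr) + L^{x}\bigl(w\shuffle(w'x')\bigr)$. Translating back, $w \shuffle (w'x') = \afk_\Xcal(F)\shuffle\afk_\Xcal(B_+^{x'}(F')) = \afk_\Xcal(F\,B_+^{x'}(F'))$ and similarly for the other term, which gives exactly the claimed identity for $\afk_\Xcal$.

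For $\afk_\Ycal$ the argument is identical in structure but uses the stuffle recursion instead. Writing $u = \afk_\Ycal(F) = v y_n$-type data more carefully: set $w = \afk_\Ycal(F)$, $w' = \afk_\Ycal(F')$, so $\afk_\Ycal(B_+^{y_n}(F)) = w y_n$ and $\afk_\Ycal(B_+^{y_{n'}}(F')) = w' y_{n'}$, and $\afk_\Ycal\bigl(B_+^{y_n}(F)\,B_+^{y_{n'}}(F')\bigr) = (w y_n) \ast (w' y_{n'})$. The defining recursion for the stuffle product is $(w y_n)\ast(w' y_{n'}) = \bigl((w y_n)\ast w'\bigr)y_{n'} + \bigl(w\ast(w'y_{n'})\bigr)y_n + \bigl(w\ast w'\bigr)y_{n+n'}$, and rewriting each term as $\afk_\Ycal$ applied to the appropriate forest product (using that $\afk_\Ycal$ is an algebra morphism for $\ast$) and applying $L^{y_{n'}}$, $L^{y_n}$, $L^{y_{n+n'}}$ respectively yields the three-term formula.

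For the planar case, I would note that $\afk_{P\Xcal} = \widehat{\alpha}_\Xcal\circ\afk_\Xcal$ and $\afk_{P\Ycal} = \widehat{\alpha}_\Ycal\circ\afk_\Ycal$ by Definition \ref{defn lift arborification}, and that $\widehat{\alpha}_\Xcal, \widehat{\alpha}_\Ycal$ are algebra morphisms commuting with $B_+^d$ in the appropriate sense (applying $\widehat\alpha$ before or after $B_+^d$ gives the same decorated non-planar tree). Hence each step above descends: the product $B_+^{x}(F)\,B_+^{x'}(F')$ of planar forests maps under $\widehat\alpha_\Xcal$ to the corresponding product of non-planar forests, and the identity for $\afk_{P\Xcal}$ follows by applying $\widehat\alpha_\Xcal$ to the already-established identity for $\afk_\Xcal$ (with the non-planar forests $\widehat\alpha_\Xcal(F)$, $\widehat\alpha_\Xcal(F')$); likewise for $\afk_{P\Ycal}$. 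The only mild subtlety — and the step I expect to require the most care — is making sure the identifications $\afk_\Xcal(B_+^{x}(F)) = \afk_\Xcal(F)\,x$ and $\afk_\Ycal(B_+^{y_n}(F)) = \afk_\Ycal(F)\,y_n$ are legitimate: these hold because $B_+^x$ applied to a \emph{forest} $F = T_1\cdots T_k$ equals $B_+^x$ of the single-tree grafting, so $\afk_\Xcal(B_+^x(F)) = L^x(\afk_\Xcal(F)) = \afk_\Xcal(F)\,x$ directly from the cocycle relation and the definition of $L^x$ in Example \ref{defn Hochschild one-cocycle}; once this is pinned down the rest is the purely combinatorial shuffle/stuffle recursion.
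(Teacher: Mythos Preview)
Your argument is correct and is essentially identical to the paper's: both use that $\afk_\Dcal$ is an algebra morphism to turn the forest product into a shuffle/stuffle, apply the cocycle relation $\afk_\Dcal\circ B_+^d = L^d\circ\afk_\Dcal$, invoke the standard right-letter recursion for $\shuffle$ (resp.\ $\ast$), and then undo these steps; the planar case is likewise reduced to the non-planar one via $\widehat\alpha_\Dcal$. One small notational slip (inherited from the paper): the composite should be $\afk_{P\Dcal} = \afk_\Dcal\circ\widehat\alpha_\Dcal$, not $\widehat\alpha_\Dcal\circ\afk_\Dcal$, but your surrounding explanation shows you have the correct order in mind.
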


\begin{proof}
  Let $\Dcal$ be $\Xcal$ or $\Ycal$. By Definition \ref{defn lift arborification}, the equality $\afk_{P\Dcal} = \hat{\alpha}_\Dcal \circ \afk_\Dcal$ implies that if the lemma holds for $\afk_{P\Dcal}$ then it holds for $\afk_\Dcal$. On the other hand, if the lemma holds for $\afk_\Dcal$, then it holds independently of total order relation. Hence, the lemma holds for $\afk_{P\Dcal}$. Thus, it suffices to prove the case $\afk_\Dcal$, and the proofs for $\afk_\Xcal$ and $\afk_\Ycal$ are similar; we will therefore prove only $\afk_\Xcal$.\\
  \begin{align*}
    \afk_\Xcal (B_+^{x}(F) \, B_+^{x'}(F')) = & \afk_\Xcal (B_+^{x}(F)) \shuffle \afk_\Xcal (B_+^{x'}(F')) \\
    = & L^{x} \circ \afk_\Xcal(F) \shuffle L^{x'} \circ \afk_\Xcal(F') \\
    = & L^{x}(\afk_\Xcal(F) \shuffle L^{x'} \circ \afk_\Xcal(F')) + L^{x'}(L^{x} \circ \afk_\Xcal(F) \shuffle \afk_\Xcal(F')) \\
    = & L^{x}(\afk_\Xcal(F) \shuffle \afk_\Xcal (B_+^{x'}(F'))) + L^{x'}(\afk_\Xcal (B_+^{x}(F)) \shuffle \afk_\Xcal(F')) \\
    = & L^{x} \circ \afk_\Xcal(B_+^{x'}(F') \, F)) + L^{x'} \circ \afk_\Xcal(B_+^{x}(F) \, F'))
  \end{align*}
\end{proof}

The following lemma describes an important structural property of arborifications.

\begin{lem}\label{lem arborification}
  Let
  $$X = \begin{xy}
    {(0,14) \ar @{{*}-{*}} (0,10)},
    {(0,10) \ar @{{*}.{*}} (0,6)},
    {(0,6) \ar @{{*}-{*}} (0,2)},
    {(0,2) \ar @{{*}-{*}} (0,-6)},
    {(0,2) \ar @{{*}-{}} (8,-6.5)},
    {(0,2) \ar @{{*}-{}} (12,-6.5)},
    {(0,6) \ar @{{*}.{}} (5,2)},
    {(0,10) \ar @{{*}.{}} (5,6)},
    {(0,14) \ar @{{*}-{}} (8,5.3)},
    {(0,14) \ar @{{*}-{}} (12,5.3)},
    {(0,2) \ar @{{*}-{*}} (-10,-6)},
    {(10,-1) \ar @{{}.{}} (10,-4)},
    {(7,-5) \ar @{{}.{}} (9,-5)},
    {(7,7) \ar @{{}.{}} (9,7)},
    {(0,-6) \ar @{{*}-{}} (-2,-10.7)}, (4,-6)*{x_b},
    {(0,-6) \ar @{{*}-{}} (2,-10.7)},
    {(-1,-9.5) \ar @{{}.{}} (1,-9.5)},
    {(-10,-6) \ar @{{*}-{}} (-8,-10.7)}, (-6,-6)*{x_a},
    {(-10,-6) \ar @{{*}-{}} (-12,-10.7)},
    {(-9,-9.5) \ar @{{}.{}} (-11,-9.5)},
    (-10,-13)*+[o][F-]{F_a},
    (0,-13)*+[o][F-]{F_b},
    (10,-9)*+[o][F-]{F_m},
    (10,3)*+[o][F-]{F_1},
  \end{xy}$$
  be a $\Xcal$-decorated rooted tree, where $F_1, \ldots, F_m, F_a, F_b$ are $\Xcal$-decorated rooted forests. Then, the simple arborification $\afk_\Xcal$ satisfies the following equation:
  $$\afk_\Xcal (X) = \afk_\Xcal \left( \begin{xy}
    {(0,16) \ar @{{*}-{*}} (0,12)},
    {(0,12) \ar @{{*}.{*}} (0,8)},
    {(0,8) \ar @{{*}-{*}} (0,4)},
    {(0,4) \ar @{{*}-{*}} (0,-4)},
    {(0,4) \ar @{{*}-{}} (10,-4)},
    {(0,8) \ar @{{*}.{}} (5,4)},
    {(0,12) \ar @{{*}.{}} (5,8)},
    {(0,16) \ar @{{*}-{}} (10,8)},
    {(0,-8) \ar @{{*}-{}} (-2,-12.7)},
    {(1,-11.5) \ar @{{}.{}} (-1,-11.5)},
    {(0,-8) \ar @{{*}-{}} (2,-12.7)},
    {(10,-3) \ar @{{}.{}} (10,1)},
    {(0,-4) \ar @{{*}-{}} (0,-8)}, (-4,-4)*{x_b},
    {(0,-4) \ar @{{*}-{}} (10,-12)}, (-4,-8)*{x_a},
    {(0,16) \ar @{{*}-{}} (7,6)},
    {(0,4) \ar @{{*}-{}} (7,-5)},
    {(0,-4) \ar @{{*}-{}} (7,-14)},
    {(6,9) \ar @{{}.{}} (8,9)},
    {(6,-3) \ar @{{}.{}} (8,-3)},
    {(6,-11) \ar @{{}.{}} (8,-11)},
    (0,-15)*+[o][F-]{F_a},
    (10,-15)*+[o][F-]{F_b},
    (10,-7)*+[o][F-]{F_m},
    (10,5)*+[o][F-]{F_1},
  \end{xy} \right) + \afk_\Xcal \left( \begin{xy}
    {(0,16) \ar @{{*}-{*}} (0,12)},
    {(0,12) \ar @{{*}.{*}} (0,8)},
    {(0,8) \ar @{{*}-{*}} (0,4)},
    {(0,4) \ar @{{*}-{*}} (0,-4)},
    {(0,4) \ar @{{*}-{}} (10,-4)},
    {(0,8) \ar @{{*}.{}} (5,4)},
    {(0,12) \ar @{{*}.{}} (5,8)},
    {(0,16) \ar @{{*}-{}} (10,8)},
    {(0,-8) \ar @{{*}-{}} (-2,-12.7)},
    {(1,-11.5) \ar @{{}.{}} (-1,-11.5)},
    {(0,-8) \ar @{{*}-{}} (2,-12.7)},
    {(10,-3) \ar @{{}.{}} (10,1)},
    {(0,-4) \ar @{{*}-{}} (0,-8)}, (-4,-4)*{x_a},
    {(0,-4) \ar @{{*}-{}} (10,-12)}, (-4,-8)*{x_b},
    {(0,16) \ar @{{*}-{}} (7,6)},
    {(0,4) \ar @{{*}-{}} (7,-5)},
    {(0,-4) \ar @{{*}-{}} (7,-14)},
    {(6,9) \ar @{{}.{}} (8,9)},
    {(6,-3) \ar @{{}.{}} (8,-3)},
    {(6,-11) \ar @{{}.{}} (8,-11)},
    (0,-15)*+[o][F-]{F_b},
    (10,-15)*+[o][F-]{F_a},
    (10,-7)*+[o][F-]{F_m},
    (10,5)*+[o][F-]{F_1},
  \end{xy} \right).$$
  
  Let
  $$Y = \begin{xy}
    {(0,14) \ar @{{*}-{*}} (0,10)},
    {(0,10) \ar @{{*}.{*}} (0,6)},
    {(0,6) \ar @{{*}-{*}} (0,2)},
    {(0,2) \ar @{{*}-{*}} (0,-6)},
    {(0,2) \ar @{{*}-{}} (8,-6.5)},
    {(0,2) \ar @{{*}-{}} (12,-6.5)},
    {(0,6) \ar @{{*}.{}} (5,2)},
    {(0,10) \ar @{{*}.{}} (5,6)},
    {(0,14) \ar @{{*}-{}} (8,5.3)},
    {(0,14) \ar @{{*}-{}} (12,5.3)},
    {(0,2) \ar @{{*}-{*}} (-10,-6)},
    {(10,-1) \ar @{{}.{}} (10,-4)},
    {(7,-5) \ar @{{}.{}} (9,-5)},
    {(7,7) \ar @{{}.{}} (9,7)},
    {(0,-6) \ar @{{*}-{}} (-2,-10.7)}, (4,-6)*{y_{n_b}},
    {(0,-6) \ar @{{*}-{}} (2,-10.7)},
    {(-1,-9.5) \ar @{{}.{}} (1,-9.5)},
    {(-10,-6) \ar @{{*}-{}} (-8,-10.7)}, (-5,-6)*{y_{n_a}},
    {(-10,-6) \ar @{{*}-{}} (-12,-10.7)},
    {(-9,-9.5) \ar @{{}.{}} (-11,-9.5)},
    (-10,-13)*+[o][F-]{F_a},
    (0,-13)*+[o][F-]{F_b},
    (10,-9)*+[o][F-]{F_m},
    (10,3)*+[o][F-]{F_1},
  \end{xy}$$
  be a $\Ycal$-decorated rooted tree, where $F_1, \ldots, F_m, F_a, F_b$ are $\Ycal$-decorated rooted forests. Then, the contracting arborification $\afk_\Ycal$ satisfies the following equation:
  $$
     \afk_\Ycal (Y) = \afk_\Ycal \left( \begin{xy}
    {(0,16) \ar @{{*}-{*}} (0,12)},
    {(0,12) \ar @{{*}.{*}} (0,8)},
    {(0,8) \ar @{{*}-{*}} (0,4)},
    {(0,4) \ar @{{*}-{*}} (0,-4)},
    {(0,4) \ar @{{*}-{}} (10,-4)},
    {(0,8) \ar @{{*}.{}} (5,4)},
    {(0,12) \ar @{{*}.{}} (5,8)},
    {(0,16) \ar @{{*}-{}} (10,8)},
    {(0,-8) \ar @{{*}-{}} (-2,-12.7)},
    {(1,-11.5) \ar @{{}.{}} (-1,-11.5)},
    {(0,-8) \ar @{{*}-{}} (2,-12.7)},
    {(10,-3) \ar @{{}.{}} (10,1)},
    {(0,-4) \ar @{{*}-{}} (0,-8)}, (-4,-4)*{y_{n_b}},
    {(0,-4) \ar @{{*}-{}} (10,-12)}, (-4,-8)*{y_{n_a}},
    {(0,16) \ar @{{*}-{}} (7,6)},
    {(0,4) \ar @{{*}-{}} (7,-5)},
    {(0,-4) \ar @{{*}-{}} (7,-14)},
    {(6,9) \ar @{{}.{}} (8,9)},
    {(6,-3) \ar @{{}.{}} (8,-3)},
    {(6,-11) \ar @{{}.{}} (8,-11)},
    (0,-15)*+[o][F-]{F_a},
    (10,-15)*+[o][F-]{F_b},
    (10,-7)*+[o][F-]{F_m},
    (10,5)*+[o][F-]{F_1},
  \end{xy} \right) + \afk_\Ycal \left( \begin{xy}
    {(0,16) \ar @{{*}-{*}} (0,12)},
    {(0,12) \ar @{{*}.{*}} (0,8)},
    {(0,8) \ar @{{*}-{*}} (0,4)},
    {(0,4) \ar @{{*}-{*}} (0,-4)},
    {(0,4) \ar @{{*}-{}} (10,-4)},
    {(0,8) \ar @{{*}.{}} (5,4)},
    {(0,12) \ar @{{*}.{}} (5,8)},
    {(0,16) \ar @{{*}-{}} (10,8)},
    {(0,-8) \ar @{{*}-{}} (-2,-12.7)},
    {(1,-11.5) \ar @{{}.{}} (-1,-11.5)},
    {(0,-8) \ar @{{*}-{}} (2,-12.7)},
    {(10,-3) \ar @{{}.{}} (10,1)},
    {(0,-4) \ar @{{*}-{}} (0,-8)}, (-4,-4)*{y_{n_a}},
    {(0,-4) \ar @{{*}-{}} (10,-12)}, (-4,-8)*{y_{n_b}},
    {(0,16) \ar @{{*}-{}} (7,6)},
    {(0,4) \ar @{{*}-{}} (7,-5)},
    {(0,-4) \ar @{{*}-{}} (7,-14)},
    {(6,9) \ar @{{}.{}} (8,9)},
    {(6,-3) \ar @{{}.{}} (8,-3)},
    {(6,-11) \ar @{{}.{}} (8,-11)},
    (0,-15)*+[o][F-]{F_b},
    (10,-15)*+[o][F-]{F_a},
    (10,-7)*+[o][F-]{F_m},
    (10,5)*+[o][F-]{F_1},
  \end{xy} \right) + \afk_\Ycal \left( \begin{xy}
    {(0,16) \ar @{{*}-{*}} (0,12)},
    {(0,12) \ar @{{*}.{*}} (0,8)},
    {(0,8) \ar @{{*}-{*}} (0,4)},
    {(0,4) \ar @{{*}-{*}} (0,-4)},
    {(0,4) \ar @{{*}-{}} (10,-4)},
    {(0,8) \ar @{{*}.{}} (5,4)},
    {(0,12) \ar @{{*}.{}} (5,8)},
    {(0,16) \ar @{{*}-{}} (10,8)},
    {(10,-3) \ar @{{}.{}} (10,1)}, (-6,-4)*{y_{n_{a}+n_{b}}},
    {(0,-4) \ar @{{*}-{}} (-2,-12.7)},
    {(1,-11) \ar @{{}.{}} (-1,-11)},
    {(0,-4) \ar @{{*}-{}} (2,-12.7)},
    {(0,-4) \ar @{{*}-{}} (10,-12)},
    {(0,16) \ar @{{*}-{}} (7,6)},
    {(0,4) \ar @{{*}-{}} (7,-5)},
    {(0,-4) \ar @{{*}-{}} (7,-14)},
    {(6,9) \ar @{{}.{}} (8,9)},
    {(6,-3) \ar @{{}.{}} (8,-3)},
    {(6,-11) \ar @{{}.{}} (8,-11)},
    (0,-15)*+[o][F-]{F_a},
    (10,-15)*+[o][F-]{F_b},
    (10,-7)*+[o][F-]{F_m},
    (10,5)*+[o][F-]{F_1},
  \end{xy} \right).$$
  
  The same statement also holds for $\afk_{P\Xcal}$ and $\afk_{P\Ycal}$, i.e., for the case of planar rooted forests.
\end{lem}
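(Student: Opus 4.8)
The plan is to isolate the local rewriting at the branching vertex---which is exactly Lemma~\ref{lem lambda-shuffle product}---and propagate it up the ``spine'' of the tree, using only that $\afk_\Xcal$ (resp.\ $\afk_\Ycal$) is an algebra morphism and satisfies the cocycle relation $\afk_\Dcal\circ B_+^{d}=L^{d}\circ\afk_\Dcal$. First, exactly as in the proof of Lemma~\ref{lem lambda-shuffle product}, it is enough to prove the two non-planar statements: $\afk_{P\Dcal}$ is obtained by first forgetting the total order via $\widehat{\alpha}_\Dcal$ and then applying $\afk_\Dcal$, and $\widehat{\alpha}_\Dcal$ commutes with each $B_+^{d}$, so applying $\widehat{\alpha}_\Dcal$ to the planar identities returns precisely the non-planar ones (the total orders carried by the trees on either side are then immaterial). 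Also the arguments for $\afk_\Xcal$ and $\afk_\Ycal$ are formally identical, so I will write out $\afk_\Xcal$ and only point out the one change needed for $\afk_\Ycal$ at the end.

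Let $v_1,\dots,v_s$ be the spine of $X$, from the root $v_1$ down to the branching vertex $v_s$, and write
$$X=B_+^{d_1}\big(G_1\cdot B_+^{d_2}\big(G_2\cdots B_+^{d_s}\big(G_s\cdot B_+^{x_a}(F_a)\,B_+^{x_b}(F_b)\big)\cdots\big)\big),$$
where $G_i$ is the (possibly empty) forest hanging from $v_i$ besides the spine edge and, at $v_s$, besides $B_+^{x_a}(F_a)$ and $B_+^{x_b}(F_b)$; the $G_i$ are the forests $F_1,\dots,F_m$ of the statement. I induct on $s$. For $s=1$, using that $\afk_\Xcal$ sends the tree product to $\shuffle$ and $\afk_\Xcal\circ B_+^{d_1}=L^{d_1}\circ\afk_\Xcal$,
$$\afk_\Xcal(X)=L^{d_1}\big(\afk_\Xcal(G_1)\shuffle\afk_\Xcal\big(B_+^{x_a}(F_a)\,B_+^{x_b}(F_b)\big)\big).$$
By Lemma~\ref{lem lambda-shuffle product}, together with the cocycle relation in the form $L^{x}\circ\afk_\Xcal=\afk_\Xcal\circ B_+^{x}$, the inner term equals $\afk_\Xcal\big(B_+^{x_a}(B_+^{x_b}(F_b)\,F_a)\big)+\afk_\Xcal\big(B_+^{x_b}(B_+^{x_a}(F_a)\,F_b)\big)$. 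Substituting, distributing $L^{d_1}(\afk_\Xcal(G_1)\shuffle(-))$ over the sum, and recombining via $L^{d_1}(\afk_\Xcal(G_1)\shuffle\afk_\Xcal(T))=\afk_\Xcal(B_+^{d_1}(G_1\cdot T))$ shows that $\afk_\Xcal(X)$ is $\afk_\Xcal$ of the two trees obtained from $X$ by replacing the configuration $B_+^{x_a}(F_a)\,B_+^{x_b}(F_b)$ at $v_s$ by $B_+^{x_a}(B_+^{x_b}(F_b)\,F_a)$, respectively $B_+^{x_b}(B_+^{x_a}(F_a)\,F_b)$; these are exactly the two trees on the right-hand side. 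For $s>1$, let $Z$ be the subtree of $X$ rooted at $v_2$; it has spine length $s-1$ and the same local shape at $v_s$, and $\afk_\Xcal(X)=L^{d_1}(\afk_\Xcal(G_1)\shuffle\afk_\Xcal(Z))$, so applying the inductive hypothesis to $Z$, distributing, and recombining as above completes the step.

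For $\afk_\Ycal$ the only difference is in the base case, where Lemma~\ref{lem lambda-shuffle product} rewrites $\afk_\Ycal\big(B_+^{y_{n_a}}(F_a)\,B_+^{y_{n_b}}(F_b)\big)$ as a sum of \emph{three} terms, the extra one being $\afk_\Ycal\big(B_+^{y_{n_a+n_b}}(F_a\,F_b)\big)$; this produces the right-hand tree whose new branch root is decorated $y_{n_a+n_b}$. The planar statements then follow from the first paragraph. The only part I expect to require real care is bookkeeping: matching each term coming out of Lemma~\ref{lem lambda-shuffle product}, once it is wrapped back up in $L^{d_1},\dots,L^{d_s}$ and shuffled against the $\afk_\Xcal(G_i)$, with the explicitly drawn trees in the statement, and verifying that the spine data $(d_i,G_i)$ are carried along unchanged---which holds because the rewriting of Lemma~\ref{lem lambda-shuffle product} is entirely local to $v_s$. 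Beyond Lemma~\ref{lem lambda-shuffle product} there is no substantive difficulty.
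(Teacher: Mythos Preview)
Your proof is correct and is essentially the paper's own argument. The only cosmetic difference is packaging: the paper isolates a one-line propagation principle---if $\afk_\Xcal(F_1)=\afk_\Xcal(F_2)$ then $\afk_\Xcal\big(B_+^{x_m}(F_1\,F_3)\big)=\afk_\Xcal\big(B_+^{x_m}(F_2\,F_3)\big)$, which follows immediately from the cocycle relation and multiplicativity---and invokes it to strip the spine in one stroke, whereas you unroll this as an explicit induction on the spine length $s$; the base case and the appeal to Lemma~\ref{lem lambda-shuffle product} are identical.
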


\begin{proof}
  Let $\Dcal$ be $\Xcal$ or $\Ycal$. By Definition \ref{defn lift arborification}, the equality $\afk_{P\Dcal} = \hat{\alpha}_\Dcal \circ \afk_\Dcal$ implies that if the lemma holds for $\afk_{P\Dcal}$ then it holds for $\afk_\Dcal$. On the other hand, if the lemma holds for $\afk_\Dcal$, then it holds independently of total order relation. Hence, the lemma holds for $\afk_{P\Dcal}$. Thus, it suffices to prove the case $\afk_\Dcal$, and the proofs for $\afk_\Xcal$ and $\afk_\Ycal$ are similar; we will therefore prove only $\afk_\Xcal$.\\
  By Definition \ref{defn arborification}, $\afk_\Xcal$ satisfies the following equation:
  \begin{equation}\label{eq arborification}
    \afk_\Xcal \circ B_+^{x_m} = L^{x_m} \circ \afk_\Xcal.
  \end{equation}
  Let $F_{i}$ be an $\Xcal$-decorated rooted forest. Using the equation \eqref{eq arborification}, we obtain that when
  $$\afk_\Xcal(F_{1}) = \afk_\Xcal(F_{2}),$$
  that we have
  \begin{align*}
    \afk_\Xcal \circ B_+^{x_m}(F_{1} \, F_{3}) = & L^{x_m} \circ \afk_\Xcal(F_{1} \, F_{3}) \\
    = & L^{x_m}(\afk_\Xcal(F_{1}) \shuffle \afk_\Xcal(F_{3})) \\
    = & L^{x_m}(\afk_\Xcal(F_{2}) \shuffle \afk_\Xcal(F_{3})) \\
    = & L^{x_m} \circ \afk_\Xcal(F_{2} \, F_{3}) \\
    = & \afk_\Xcal \circ B_+^{x_m}(F_{2} \, F_{3}).
  \end{align*}
  Thus, it remains to prove the following equation:
  $$\afk_\Xcal (B_+^{x_a}(F_{4}) \, B_+^{x_b}(F_{5})) = \afk_\Xcal (B_+^{x_a}(B_+^{x_b}(F_{5}) \, F_{4})) + \afk_\Xcal (B_+^{x_b}(B_+^{x_a}(F_{4}) \, F_{5})).$$
  Using Equation \eqref{eq arborification} again, we obtain that the right-hand side is equal to the following:
  $$L^{x_a} \circ \afk_\Xcal(B_+^{x_b}(F_{5}) \, F_{4})) + L^{x_b} \circ \afk_\Xcal(B_+^{x_a}(F_{4}) \, F_{5})),$$
  which is equal to the left-hand side by Lemma \ref{lem lambda-shuffle product}.
\end{proof}

Before we state Manchon’s question, we give the following definition.

\begin{defn}\label{defn ladder tree section}
  The {\it ladder tree section} $\ell_\Xcal$ of the simple arborification $\afk_\Xcal$ (resp. $\ell_\Ycal$ of the contracting arborification $\afk_\Ycal$) is defined by
  $$
  \ell_\Xcal (x_{m_1}x_{m_2}\cdots x_{m_s}) =
  \begin{xy}
    (0,-4)*++!L{\scriptstyle x_{m_1}},
    (0,0)*++!L{\scriptstyle x_{m_2}},
    (0,4)*++!L{\scriptstyle x_{m_s}},
    {(0,-4) \ar @{{*}-{*}} (0,0)},
    {(0,0) \ar @{{*}.{*}} (0,4)}
  \end{xy}
  \ \left( \text{resp.} \ 
  \ell_\Ycal (y_{n_1}y_{n_2}\cdots y_{n_t}) =
  \begin{xy}
    (0,-4)*++!L{\scriptstyle y_{n_1}},
    (0,0)*++!L{\scriptstyle y_{n_2}},
    (0,4)*++!L{\scriptstyle y_{n_t}},
    {(0,-4) \ar @{{*}-{*}} (0,0)},
    {(0,0) \ar @{{*}.{*}} (0,4)}
  \end{xy}\right).$$
  Note that ladder tree has a unique total order relation $\alpha$, therefore $\ell_\Xcal$ (resp. $\ell_\Ycal$) is also a section of $\afk_{P\Xcal}$ (resp. $\afk_{P\Ycal}$). In this case, we use the notation $\ell_{P\Xcal}$ and $\ell_{P\Ycal}$.
\end{defn}

\begin{ques}[\cite{AMZV}]
  Manchon posed a question to find a natural map $\sfk^T$ respecting the tree structures, which makes the diagram \eqref{diagram} commutative.
\end{ques}
An obvious answer was given by Manchon \cite{AMZV}, namely:
$$\sfk^T = \ell_\Xcal \circ \sfk \circ \afk_\Ycal.$$
While it renders the diagram commutative, it has the drawback of completely destroying the geometry of trees. There is an attempt by Clavier \cite{DSRforAMZV}.

\begin{defn}\label{defn map sN}
  Let $Y = B_+^{y_n}(Y_1\cdots Y_m)$ be a $\Ycal$-decorated rooted tree in $\Hcal_{BCK}^\Ycal$. The linear map $\sfk^N : \Hcal_{BCK}^\Ycal \rightarrow \Hcal_{BCK}^\Xcal$ is defined recursively by
  $$\sfk^{N}(B_+^{y_n}(Y_1\cdots Y_m)) = (B_+^{x_0})^n\circ B_+^{x_1}(\sfk^{N}(Y_1\cdots Y_m)),$$
  where
  $$\sfk^{N}(Y_1\cdots Y_m) = \sfk^{N}(Y_1) \cdots \sfk^{N}(Y_m),$$
  which is a forest of $\Xcal$-decorated rooted trees.
\end{defn}

In the case of planar rooted trees, we consider the lift of $\sfk^N$ as follows:

\begin{defn}\label{defn map sPN}
  Let $Y = B_+^{y_n}(Y_1\cdots Y_m)$ be a $\Ycal$-decorated planar rooted tree in $\Hcal_{NBCK}^{P\Ycal}$. The linear map $\sfk^{PN} : \Hcal_{NBCK}^{P\Ycal} \rightarrow \Hcal_{NBCK}^{P\Xcal}$ is defined recursively by
  $$\sfk^{PN}(B_+^{y_n}(Y_1\cdots Y_m)) = (B_+^{x_0})^n\circ B_+^{x_1}(\sfk^{PN}(Y_1\cdots Y_m)),$$
  where
  $$\sfk^{PN}(Y_1\cdots Y_m) = \sfk^{PN}(Y_1) \cdots \sfk^{PN}(Y_m),$$
  which is a forest of $\Xcal$-decorated planar rooted trees.
\end{defn}

The following lemma states a trivial but important fact: the map $\sfk^{PN}$, just like $\sfk^N$, also satisfies the commutativity condition for ladder trees.

\begin{lem}\label{lem map sPN}
  The following diagram is commutative.
  $$\begin{tikzcd}
   \Hcal_{NBCK}^{P\Ycal} \arrow[r, "\sfk^{PN}"]  & \Hcal_{NBCK}^{P\Xcal} \arrow[d, "\afk_{P\Xcal}"] \\
  \QQ\langle\Ycal\rangle \arrow[r, "\sfk"] \arrow[u, "\ell_{P\Ycal}"'] & \QQ\langle\Xcal\rangle               
  \end{tikzcd}$$
\end{lem}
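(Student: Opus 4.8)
The plan is to reduce everything to a computation on ladder trees, where both sides of the square can be written explicitly in terms of words. Concretely, I would take an arbitrary word $y_{n_1}y_{n_2}\cdots y_{n_t} \in \QQ\langle\Ycal\rangle$ and chase it around the square. Going up and then right, $\ell_{P\Ycal}(y_{n_1}\cdots y_{n_t})$ is the planar ladder tree whose vertices are decorated $y_{n_1}, \ldots, y_{n_t}$ from leaf to root, so by Definition \ref{defn map sPN} it equals $B_+^{y_{n_t}} \circ \cdots \circ B_+^{y_{n_1}}(\mathbf 1)$ (a ladder has $m=1$ at each stage, or $m=0$ at the leaf), and applying $\sfk^{PN}$ recursively gives the planar ladder tree
$$(B_+^{x_0})^{n_t}\circ B_+^{x_1}\circ\cdots\circ (B_+^{x_0})^{n_1}\circ B_+^{x_1}(\mathbf 1),$$
i.e. the ladder tree section of the word $x_1 x_0^{n_1-1}\cdots x_1 x_0^{n_t-1}$. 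Going right and then up, $\sfk(y_{n_1}\cdots y_{n_t}) = x_1 x_0^{n_1-1}\cdots x_1 x_0^{n_t-1}$ by Definition \ref{defn map s}. So the claim becomes: $\afk_{P\Xcal}$ applied to the ladder tree $\ell_{P\Xcal}(w)$ returns the word $w$, for $w = x_1 x_0^{n_1-1}\cdots x_1 x_0^{n_t-1}$.

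The key step is therefore the identity $\afk_{P\Xcal}\circ \ell_{P\Xcal} = \id$ on $\QQ\langle\Xcal\rangle$, which I would prove by induction on word length using Definition \ref{defn lift arborification} (so $\afk_{P\Xcal} = \widehat\alpha_\Xcal \circ \afk_\Xcal$, but on ladder trees $\widehat\alpha_\Xcal$ is harmless since the total order is unique, cf. the remark in Definition \ref{defn ladder tree section}) together with the defining property $\afk_\Xcal \circ B_+^{x_m} = L^{x_m}\circ \afk_\Xcal$ from Definition \ref{defn arborification} and the definition $L^{x_m}(d_1\cdots d_s) = d_1\cdots d_s x_m$ from Example \ref{defn Hochschild one-cocycle}. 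Indeed, writing a nonempty word as $w = w' x_m$ with $\ell_\Xcal(w') x_m$-grafted, one has $\ell_\Xcal(w x_m) = B_+^{x_m}(\ell_\Xcal(w'))$ wait— more carefully, if the last letter corresponds to the root, $\ell_\Xcal(x_{m_1}\cdots x_{m_s}) = B_+^{x_{m_s}}(\ell_\Xcal(x_{m_1}\cdots x_{m_{s-1}}))$, so $\afk_\Xcal(\ell_\Xcal(x_{m_1}\cdots x_{m_s})) = L^{x_{m_s}}(\afk_\Xcal(\ell_\Xcal(x_{m_1}\cdots x_{m_{s-1}}))) = L^{x_{m_s}}(x_{m_1}\cdots x_{m_{s-1}}) = x_{m_1}\cdots x_{m_s}$ by the inductive hypothesis; the base case is the empty word. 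This shows $\ell_\Xcal$ is a section of $\afk_\Xcal$ (as already asserted in Definition \ref{defn ladder tree section}), hence $\ell_{P\Xcal}$ a section of $\afk_{P\Xcal}$.

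Finally I would assemble the two observations: for the word $w = x_1 x_0^{n_1-1}\cdots x_1 x_0^{n_t-1}$ we have, reading its letters from the root down, exactly the grafting sequence $(B_+^{x_0})^{n_t}\circ B_+^{x_1}\circ\cdots\circ(B_+^{x_0})^{n_1}\circ B_+^{x_1}$ applied to $\mathbf 1$, which is both $\sfk^{PN}(\ell_{P\Ycal}(y_{n_1}\cdots y_{n_t}))$ (by unwinding Definition \ref{defn map sPN} on the ladder, where at each level the forest has one component) and $\ell_{P\Xcal}(\sfk(y_{n_1}\cdots y_{n_t}))$ (by Definition \ref{defn ladder tree section}). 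Applying $\afk_{P\Xcal}$ and using $\afk_{P\Xcal}\circ\ell_{P\Xcal} = \id$ yields $\afk_{P\Xcal}(\sfk^{PN}(\ell_{P\Ycal}(y_{n_1}\cdots y_{n_t}))) = \sfk(y_{n_1}\cdots y_{n_t})$, which is the commutativity of the diagram since $\{y_{n_1}\cdots y_{n_t}\}$ spans $\QQ\langle\Ycal\rangle$ and all maps are linear.

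I expect the only real subtlety—hardly an obstacle—to be bookkeeping the orientation convention: which end of the ladder carries the root, so that the iterated $B_+$'s compose in the correct order to match $\sfk$'s formula $y_{n_1}\cdots y_{n_d}\mapsto x_1 x_0^{n_1-1}\cdots x_1 x_0^{n_d-1}$ and the recursion in Definition \ref{defn map sPN}. Once the indexing is fixed consistently with Definitions \ref{defn ladder tree section} and \ref{defn map sPN}, the argument is a direct induction with no estimates and no use of the coproduct beyond the one-cocycle property already packaged into $\afk_\Xcal\circ B_+^{x_m} = L^{x_m}\circ\afk_\Xcal$.
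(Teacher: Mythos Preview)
Your proposal is correct and follows essentially the same approach as the paper: take a word $y_{n_1}\cdots y_{n_m}$, write $\ell_{P\Ycal}(y_{n_1}\cdots y_{n_m}) = B_+^{y_{n_m}}\circ\cdots\circ B_+^{y_{n_1}}(\emptyset)$, unwind $\sfk^{PN}$ recursively to a ladder in $\Hcal_{NBCK}^{P\Xcal}$, and then read off $\afk_{P\Xcal}$ using $\afk_\Xcal\circ B_+^{x_m} = L^{x_m}\circ\afk_\Xcal$. The only cosmetic difference is that you isolate $\afk_{P\Xcal}\circ\ell_{P\Xcal}=\id$ as an explicit intermediate step, whereas the paper performs the whole chain in a single display; since Definition~\ref{defn ladder tree section} already records that $\ell_\Xcal$ is a section of $\afk_\Xcal$, this is not a genuinely different route.
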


\begin{proof}
  This follows from the following direct computation.
  \begin{align*}
    \afk_{P\Xcal} \circ \sfk^{PN} \circ \ell_{P\Ycal}(y_{n_1}\cdots y_{n_m}) =  & \afk_{P\Xcal} \circ \sfk^{PN} (B_+^{y_{n_m}} \circ \cdots \circ B_+^{y_{n_1}}(\emptyset)) \\
    = & \afk_{P\Xcal} ((B_+^{x_{0}})^{n_m} \circ B_+^{x_{1}} (\sfk^{PN} (B_+^{y_{n_{m-1}}} \circ \cdots \circ B_+^{y_{n_1}}(\emptyset)))) \\
    = & \afk_{P\Xcal} ((B_+^{x_{0}})^{n_m} \circ B_+^{x_{1}} \circ \cdots \circ (B_+^{x_{0}})^{n_1} \circ B_+^{x_{1}} (\emptyset)) \\
    = & x_{0}^{n_m} x_{1} \cdots x_{0}^{n_1} x_{1} \\
    = & \sfk(y_{n_1}\cdots y_{n_m})
  \end{align*}
\end{proof}

The following theorem was proved by Clavier \cite{DSRforAMZV}.

\begin{thm}[Clavier \cite{DSRforAMZV}]\label{thm Clavier}
  Let $F$ be a forest in $\Hcal_{BCK}^\Ycal$. If $\zeta(F)$ converges, then we have
  $$\zeta(\sfk^N(F))\leq \zeta(F).$$
  Furthermore, the equality holds if, and only if, $F$ is a ladder forest.
\end{thm}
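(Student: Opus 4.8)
The plan is to express both numbers $\zeta(\sfk^N(F))$ and $\zeta(F)$ as sums of the \emph{same} positive terms $\prod_{v}n_v^{-k_v}$ over two explicitly described sets of lattice points, one contained in the other, so that the inequality and the characterization of the equality case follow at once. For a vertex $v$ of $F$ write $k_v$ for the integer with $\delta_F(v)=y_{k_v}$, and call a vertex $c$ a \emph{child} of $v$ if it lies directly below $v$ in the rooted tree of $F$ containing it.

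\emph{Step 1: from the Yamamoto integral to a series.} Recall that in $\sfk^N(F)$ each vertex $v$ of $F$ is replaced by a ladder $C_v$ on $k_v$ vertices, exactly one of which — the bottom one, farthest from the root and adjacent to the ladders of the children of $v$ — carries $x_1$, the remaining $k_v-1$ carrying $x_0$; this is the tree-level form of $\sfk(y_{k_v})=x_1x_0^{k_v-1}$, and it makes $\sfk^N(F)$ satisfy the convergence condition for Yamamoto's integral (root decorated $x_0$, leaves decorated $x_1$), since $\zeta(F)$ converges. In the integrand of $\zeta(\sfk^N(F))$ expand $\omega_{x_1}(t_w)=\frac{dt_w}{1-t_w}=\sum_{m_w\geq 0}t_w^{m_w}\,dt_w$ and $\omega_{x_0}(t_w)=t_w^{-1}\,dt_w$, and interchange the sum with the integral (legitimate by Tonelli, all terms being nonnegative). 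The elementary building block, proved by integrating from the leaves toward the root, is
$$\int_{\Delta(T)}\ \prod_{w\in V(T)}t_w^{e_w}\,dt_w\ =\ \prod_{w\in V(T)}\frac{1}{\sigma_w},\qquad \sigma_w:=\sum_{u\,\preceq_T\,w}(e_u+1),$$
valid for any rooted forest $T$ and exponents $(e_w)$ with all $\sigma_w>0$. Apply it to $T=\sfk^N(F)$ with $e_w=m_w$ on the $x_1$-vertices and $e_w=-1$ on the $x_0$-vertices. For $w\in C_v$, every $x_0$-vertex below $w$ inside $C_v$ contributes $e+1=0$ to $\sigma_w$, so $\sigma_w$ does not depend on the position of $w$ within $C_v$; one finds $\sigma_w=N_v$ for all such $w$, where
$$N_v\ :=\ \sum_{u\,\preceq_F\,v}(\mu_u+1)$$
and $\mu_u\geq 0$ is the summation index attached to the unique $x_1$-vertex of $C_u$. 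Since $C_v$ has $k_v$ vertices, this yields
$$\zeta(\sfk^N(F))\ =\ \sum_{(\mu_v)\in\ZZ_{\geq 0}^{V(F)}}\ \prod_{v\in V(F)}\frac{1}{N_v^{k_v}}.$$

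\emph{Step 2: comparing the two index sets.} Since the subtrees below distinct children of $v$ are disjoint, $\sum_{u\prec_F v}(\mu_u+1)=\sum_{c}N_c$, and the assignment $(\mu_v)\mapsto(n_v:=N_v)$ is a bijection from $\ZZ_{\geq 0}^{V(F)}$ onto
$$\Scal_F\ :=\ \Bigl\{(n_v)\in\NN^{V(F)}\ :\ n_v\geq 1+{\textstyle\sum_{c\ \text{child of}\ v}n_c}\ \text{ for all }v\Bigr\},$$
the inverse being $\mu_v=n_v-1-\sum_{c}n_c$ (empty sum $=0$); hence $\zeta(\sfk^N(F))=\sum_{(n_v)\in\Scal_F}\prod_v n_v^{-k_v}$. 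On the other hand, by transitivity of $\preceq_F$ the condition ``$n_u<n_v$ whenever $u\prec_F v$'' is equivalent to ``$n_v\geq 1+\max_{c\ \text{child of}\ v}n_c$ for all $v$'' (with $\max\emptyset=0$, so it reads $n_v\geq 1$ at a leaf), so $\zeta(F)=\sum_{(n_v)\in\Ical_F}\prod_v n_v^{-k_v}$ over the set $\Ical_F$ of all such tuples. As every $n_c\geq 1$, we have $\sum_c n_c\geq\max_c n_c$, hence $\Scal_F\subseteq\Ical_F$; all summands being positive, $\zeta(\sfk^N(F))\leq\zeta(F)$. If $F$ is a ladder forest every vertex has at most one child, so $\sum_c n_c=\max_c n_c$ throughout, $\Scal_F=\Ical_F$, and equality holds. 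Conversely, suppose some vertex $v$ has at least two children $c_1,\dots,c_r$. Choose any valid assignment on each subtree below a $c_i$ (these exist), put $n_v:=1+\max_i n_{c_i}$, and extend to a full tuple in $\Ical_F$ (process the remaining vertices in decreasing depth, at each step choosing a value larger than those of its children). Because $r\geq 2$ and each $n_{c_i}\geq 1$ force $\sum_i n_{c_i}>\max_i n_{c_i}$, this tuple violates the defining inequality of $\Scal_F$ at $v$; it therefore contributes a strictly positive term to $\zeta(F)$ absent from $\zeta(\sfk^N(F))$, so $\zeta(\sfk^N(F))<\zeta(F)$.

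\emph{Main obstacle.} The crux is Step 1: proving the poset integral formula and, above all, recognizing that the specific shape of $\sfk^N$ — exactly one $x_1$ per original vertex, placed at the bottom of $C_v$ — is precisely what forces the $k_v$ factors $\prod_{w\in C_v}\sigma_w^{-1}$ to collapse into the single power $N_v^{-k_v}$, rather than into a product of $k_v$ distinct denominators, and hence produces the particular set $\Scal_F$, which is \emph{contained} in $\Ical_F$ (not merely incomparable to it). This is the point at which the choice of $\sfk^N$ is genuinely used, and it is the structural reason the inequality always points the same way. The remaining work — the bijection $(\mu_v)\leftrightarrow(n_v)$ and the explicit witness for the strict inequality — is routine. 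An alternative, closer to the recursive style used elsewhere in the paper, is to induct on $|V(F)|$ using $\zeta(B_+^{y_n}(G))=\sum_{k\geq 1}k^{-n}\zeta_{<k}(G)$ together with the analogous recursion for the Yamamoto integral of $\sfk^N(B_+^{y_n}(G))$, and comparing the two level by level; this trades the global poset integral for the same series-versus-integral dictionary applied locally, together with the same combinatorial inclusion.
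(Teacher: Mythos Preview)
Your proof is correct and self-contained. Note, however, that the paper does \emph{not} supply its own proof of this theorem: it is stated with attribution to Clavier \cite{DSRforAMZV} and used as an external input (to motivate why $\sfk^N$ alone cannot serve as $\sfk^T$). So there is no ``paper's proof'' to compare against here.

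That said, your argument is the natural one and essentially the one Clavier gives: expand each factor $\omega_{x_1}(t)=\sum_{m\geq 0}t^m\,dt$ in the Yamamoto integral, use the poset monomial integral $\int_{\Delta(T)}\prod_w t_w^{e_w}\,dt_w=\prod_w \sigma_w^{-1}$, and observe that the placement of the unique $x_1$ at the bottom of each ladder $C_v$ forces $\sigma_w=N_v$ for every $w\in C_v$, collapsing the $k_v$ factors into $N_v^{-k_v}$. The change of variables $n_v=N_v$, with inverse $\mu_v=n_v-1-\sum_{c}n_c$, identifies the summation domain as $\Scal_F\subseteq\Ical_F$; positivity of the summands gives the inequality, and your explicit witness at a branching vertex gives the strictness. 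Two minor remarks: (i) your integral identity needs $\sigma_w>0$ at every vertex, which you implicitly use but could state --- here $\sigma_w=N_v\geq \mu_v+1\geq 1$, so this is automatic; (ii) the convergence hypothesis on $\zeta(F)$ (each root decorated by $y_n$ with $n\geq 2$) is exactly what guarantees the root of $\sfk^N(F)$ carries $x_0$, so the Yamamoto integral converges as you assert. The forest case is handled by your formula as written, since both $\Delta(T)$ and the $\sigma_w$ factor over connected components.
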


By definition, we have $\zeta(F) = \zeta(\sfk(\afk_{\Ycal}(F)))$ and $\zeta(\sfk^N(F)) = \zeta(\afk(\sfk^N(F)))$. Thus, by Theorem \ref{thm Clavier}, we see that the diagram \eqref{diagram} fails to commute when $\sfk^T = \sfk^N$.

\section{Main theorem}\label{section Main theorem}
In this section, we consider planar rooted trees and reformulate Manchon’s question in the planar setting. We then construct a linear map that modifies Clavier’s map and show that the lifted diagram becomes commutative with this linear map. This map is then used to construct a solution that satisfies the commutativity condition in Manchon’s original question.

Our goal is to find a solution that makes the diagram commutative while preserving the tree structure as much as possible. We begin by considering the following example.
\begin{eg}\label{eg simplest example}
  Let $Y$ be the $\Ycal$-decorated rooted tree in $\Hcal_{BCK}^\Ycal$ depicted as
  \begin{xy}
    {(-4,-2)*++!R{\scriptstyle y_a} \ar @{{*}-{*}} (0,2)*++!D{\scriptstyle y_b}},
    {(0,2) \ar @{{*}-{*}} (4,-2)*++!L{\scriptstyle y_c}}
  \end{xy}
  . It is the simplest example of a non-ladder forest. From Clavier's theorem (Theorem \ref{thm Clavier}), we know that 
  $$\afk_\Xcal\circ \sfk^N(Y) \neq \sfk \circ \afk_\Ycal(Y).$$
  In order to construct a map $\sfk^T$ which makes the diagram \eqref{diagram} commutative, we consider the error term
  $$\ell_\Xcal(\sfk \circ \afk_\Ycal(Y) - \afk_\Xcal\circ \sfk^N(Y)),$$
  and in this case, the error term can be computed by Lemma \ref{lem arborification} as follows:
  \begin{align*}
    & \ell_\Xcal(\sfk \circ \afk_\Ycal(Y) - \afk_\Xcal\circ \sfk^N(Y)) =  \\
    & \sfk^N\left(
    \begin{xy}
      (0,-2)*++!L{\scriptstyle y_{a+c}},
      (0,2)*++!L{\scriptstyle y_b},
      {(0,-2) \ar @{{*}-{*}} (0,2)}
    \end{xy} - 
    \sum_{i=1}^{a-1}
    \binom{a-i+c-1}{c-1}
    \begin{xy}
      (0,-4)*++!L{\scriptstyle y_i},
      (0,0)*++!L{\scriptstyle y_{a+c-i}},
      (0,4)*++!L{\scriptstyle y_b},
      {(0,-4) \ar @{{*}-{*}} (0,0)},
      {(0,0) \ar @{{*}-{*}} (0,4)}
    \end{xy}- 
    \sum_{i=1}^{c-1}
    \binom{c-i+a-1}{a-1}
    \begin{xy}
      (0,-4)*++!L{\scriptstyle y_i},
      (0,0)*++!L{\scriptstyle y_{a+c-i}},
      (0,4)*++!L{\scriptstyle y_b},
      {(0,-4) \ar @{{*}-{*}} (0,0)},
      {(0,0) \ar @{{*}-{*}} (0,4)}
    \end{xy}\right).
  \end{align*}
  Note that this error term is determined by a given rooted tree and its two vertices cannot be compared in opposite tree order.
\end{eg}

The final computation from the above example can be generalized to the following lemma.
\begin{lem}\label{lem the simple arborification formula}
  Let $Y$ be a $\Ycal$-decorated planar rooted tree with minimal incomparable pair $(a,b)$, and write $Y$ as
  \begin{equation}\label{Y-decorated planar rooted tree}
    Y = B_+^{y_{n_1}} \circ \cdots \circ B_+^{y_{n_m}}(B_+^{y_{n_a}}(F_a)\,B_+^{y_{n_b}}(F_b)\,F),
  \end{equation}
  as in Lemma \ref{lem expression of planar rooted tree}. Then
  \begin{align*}
    & \afk_{P\Xcal}(\sfk^{PN}(Y)) = \\
    & \sum_{i=1}^{n_a} \binom{n_a-i+n_b-1}{n_b-1} \afk_{P\Xcal}(\sfk^{PN}(B_+^{y_{n_1}} \circ \cdots \circ B_+^{y_{n_m}}(B_+^{y_{n_a+n_b-i}}(B_+^{y_i}(F_a)\,F_b)\,F)))\\
    & + \sum_{i=1}^{n_b} \binom{n_b-i+n_a-1}{n_a-1} \afk_{P\Xcal}(\sfk^{PN}(B_+^{y_{n_1}} \circ \cdots \circ B_+^{y_{n_m}}(B_+^{y_{n_a+n_b-i}}(B_+^{y_i}(F_b)\,F_a)\,F))).
  \end{align*}
\end{lem}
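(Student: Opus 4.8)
The plan is to peel off from both sides everything lying along the spine $B_+^{y_{n_1}}\circ\cdots\circ B_+^{y_{n_m}}$ and everything in the extra forest $F$, reducing the statement to a single identity about shuffle products in $\QQ\langle\Xcal\rangle$, and then to prove that identity by a direct counting argument. Put $\Psi:=\afk_{P\Xcal}\circ\sfk^{PN}\colon\Hcal_{NBCK}^{P\Ycal}\to\QQ\langle\Xcal\rangle$. Since $\sfk^{PN}$ is multiplicative on forests (Definition \ref{defn map sPN}) and $\afk_{P\Xcal}$ is an algebra morphism (Definition \ref{defn lift arborification}), the composite $\Psi$ is an algebra morphism into $(\QQ\langle\Xcal\rangle,\shuffle)$; moreover, Definition \ref{defn map sPN} together with the identity $\afk_{P\Xcal}\circ B_+^{x_m}=L^{x_m}\circ\afk_{P\Xcal}$ (inherited from Definition \ref{defn arborification}) gives $\Psi(B_+^{y_n}(G))=\Psi(G)\,x_1x_0^{\,n-1}$, the right concatenation being applied monomial-wise. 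Applying $\Psi$ to \eqref{Y-decorated planar rooted tree} and to each tree on the right-hand side, and writing $\Lambda_n(w):=w\,x_1x_0^{\,n-1}$, one sees that both sides of the lemma equal $\Lambda_{n_1}\circ\cdots\circ\Lambda_{n_m}\big(S\shuffle\Psi(F)\big)$, where $S$ denotes the corresponding side of the identity displayed below (here $P:=\Psi(F_a)$ and $Q:=\Psi(F_b)$, using multiplicativity of $\Psi$):
\begin{align*}
(P\,x_1x_0^{\,n_a-1})\shuffle(Q\,x_1x_0^{\,n_b-1})={}&\sum_{i=1}^{n_a}\binom{n_a-i+n_b-1}{n_b-1}\big((P\,x_1x_0^{\,i-1})\shuffle Q\big)x_1x_0^{\,n_a+n_b-i-1}\\
&{}+\sum_{i=1}^{n_b}\binom{n_b-i+n_a-1}{n_a-1}\big((Q\,x_1x_0^{\,i-1})\shuffle P\big)x_1x_0^{\,n_a+n_b-i-1}.
\end{align*}
So it suffices to prove this identity; by bilinearity in $P,Q$ one may take $P$ and $Q$ to be monomials.

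For the core identity, fix monomials $P,Q$, set $A:=P\,x_1x_0^{\,n_a-1}$ and $B:=Q\,x_1x_0^{\,n_b-1}$, and mark in $A$ the occurrence of $x_1$ immediately after $P$ and in $B$ the one immediately after $Q$. In each word occurring in the shuffle $A\shuffle B$ these two marked letters lie at distinct positions, so $A\shuffle B$ splits as the sum of the part in which the $B$-mark is the rightmost of the two marks and the part in which the $A$-mark is. In the first part, everything to the right of the $B$-mark is a run of $x_0$'s consisting of all $n_b-1$ trailing $x_0$'s of $B$ together with exactly $j$ of the $n_a-1$ trailing $x_0$'s of $A$ for some $0\le j\le n_a-1$, contributing a multiplicity $\binom{j+n_b-1}{n_b-1}$, while everything up to and including the $B$-mark is an arbitrary interleaving of $P\,x_1x_0^{\,n_a-1-j}$ with $Q$ followed by the $B$-mark $x_1$. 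Hence this part equals $\sum_{j=0}^{n_a-1}\binom{j+n_b-1}{n_b-1}\big((P\,x_1x_0^{\,n_a-1-j})\shuffle Q\big)x_1x_0^{\,n_b-1+j}$, which under the substitution $i=n_a-j$ is exactly the first sum above; the second part, by the symmetric argument with $(A,P,n_a)$ and $(B,Q,n_b)$ interchanged, is the second sum. This proves the core identity, hence the lemma.

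The substantive difficulty is entirely in the last step: one must check that the two marked $x_1$'s are genuinely well-defined letter occurrences even when $P$ and $Q$ contain $x_0$'s and $x_1$'s of their own, that splitting by ``which mark is rightmost'' is exhaustive and disjoint, and — the real point — that counting the interleavings of the two trailing $x_0$-runs reproduces precisely the binomial coefficients and index ranges in the statement. One could instead induct on $n_a+n_b$ using the recursive rule for $\shuffle$, or expand $\afk_{P\Xcal}(\sfk^{PN}(Y))$ by repeated use of the $\Xcal$-version of Lemma \ref{lem arborification}; neither is shorter.
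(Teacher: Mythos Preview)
Your proof is correct. The reduction via $\Psi=\afk_{P\Xcal}\circ\sfk^{PN}$ to a shuffle identity in $\QQ\langle\Xcal\rangle$ is valid, and your marked-letter counting argument for that identity checks out (the substitution $i=n_a-j$ recovers exactly the binomials and index ranges in the statement).

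The route, however, differs from the paper's. The paper never leaves the tree level: it writes out $\sfk^{PN}(Y)$ explicitly as an $\Xcal$-decorated planar tree, then repeatedly applies the $\Xcal$-version of Lemma~\ref{lem arborification} to the two chains of $x_0$-vertices sitting above $F_a$ and $F_b$, merging them one vertex at a time into a single chain; the binomials arise as the number of lattice paths produced by this recursion. You instead push everything down to $\QQ\langle\Xcal\rangle$ at the outset and prove the corresponding shuffle identity by a single global combinatorial argument. The two are doing the same bookkeeping---yours is the ``closed form'' of the paper's inductive unwinding, and indeed you mention the paper's method as an alternative in your last sentence. Your approach has the advantage of isolating a clean, reusable identity about words; the paper's has the advantage of making visible, via the tree pictures, exactly which pair of vertices is being resolved at each step, which matches the way the error term $Y^e$ is later defined.
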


\begin{proof}
  The image of \eqref{Y-decorated planar rooted tree} under $\sfk^{PN}$ is the following:
  $$\sfk^{PN}(Y) = \begin{xy}
    {(0,24) \ar @{{*}.{*}} (0,20)}, (4,24)*{x_0},
    {(0,20) \ar @{{*}-{*}} (0,16)}, (4,20)*{x_0},
    {(0,16) \ar @{{*}.{*}} (0,8)}, (4,16)*{x_1},
    {(-5,16) \ar @{{}.{}} (-5,8)},
    {(0,8) \ar @{{*}.{*}} (0,4)}, (4,8)*{x_0},
    {(0,4) \ar @{{*}-{*}} (0,0)}, (4,4)*{x_0},
    {(0,0) \ar @{{*}-{*}} (0,-8)}, (4,0)*{x_1},
    {(0,0) \ar @{{*}-{}} (14,-9)},
    {(13,-7.5) \ar @{{}.{}} (14,-7.5)},
    {(0,0) \ar @{{*}-{}} (16,-8)},
    {(0,0) \ar @{{*}-{*}} (-16,-8)},
    {(0,-8) \ar @{{*}.{*}} (0,-12)}, (4,-8)*{\underline{x_0}},
    {(-16,-8) \ar @{{*}.{*}} (-16,-12)}, (-12,-8)*{\underline{x_0}},
    {(0,-12) \ar @{{*}-{*}} (0,-16)}, (4,-12)*{x_0},
    {(-16,-12) \ar @{{*}-{*}} (-16,-16)}, (-12,-12)*{x_0},
    {(0,-16) \ar @{{*}-{}} (-2,-21)}, (4,-16)*{x_1},
    {(-1,-19.5) \ar @{{}.{}} (1,-19.5)},
    {(0,-16) \ar @{{*}-{}} (2,-21)},
    {(-16,-16) \ar @{{*}-{}} (-14,-21)}, (-12,-16)*{x_1},
    {(-15,-19.5) \ar @{{}.{}} (-17,-19.5)},
    {(-16,-16) \ar @{{*}-{}} (-18,-21)},
    (-16,-23)*+[o][F-]{F_a},
    (0,-23)*+[o][F-]{F_b},
    (16,-10.5)*+[o][F-]{F},
    {(0,24) \ar @/_2mm/ @{-}_{n_1} (0,16)},
    {(0,8) \ar @/_2mm/ @{-}_{n_m} (0,0)},
    {(0,-8) \ar @/_2mm/ @{-}_{n_b} (0,-16)},
    {(-16,-8) \ar @/_2mm/ @{-}_{n_a} (-16,-16)},
  \end{xy}.$$
  Applying Lemma \ref{lem arborification} (the case of planar rooted forests) to the two vertices decorated by $\underline{x_0}$, we obtain:
  \begin{align*} 
    & \afk_{P\Xcal}(\sfk^{PN}(Y)) = \\
    & \afk_{P\Xcal} \left( \begin{xy}
    {(0,28) \ar @{{*}.{*}} (0,24)}, (4,28)*{x_0},
    {(0,24) \ar @{{*}-{*}} (0,20)}, (4,24)*{x_0},
    {(0,20) \ar @{{*}.{*}} (0,12)}, (4,20)*{x_1},
    {(-5,20) \ar @{{}.{}} (-5,12)},
    {(0,12) \ar @{{*}.{*}} (0,8)}, (4,12)*{x_0},
    {(0,8) \ar @{{*}-{*}} (0,4)}, (4,8)*{x_0},
    {(0,4) \ar @{{*}-{*}} (0,-4)}, (4,4)*{x_1},
    {(0,-4) \ar @{{*}-{*}} (0,-12)}, (4,-4)*{x_0},
    {(0,4) \ar @{{*}-{}} (16,-4)},
    {(13,-3.5) \ar @{{}.{}} (14,-3.5)},
    {(0,4) \ar @{{*}-{}} (14,-5)},
    {(0,-4) \ar @{{*}-{*}} (-16,-12)},
    {(0,-12) \ar @{{*}.{*}} (0,-16)}, (4,-12)*{x_0},
    {(-16,-12) \ar @{{*}.{*}} (-16,-16)}, (-12,-12)*{x_0},
    {(0,-16) \ar @{{*}-{*}} (0,-20)}, (4,-16)*{x_0},
    {(-16,-16) \ar @{{*}-{*}} (-16,-20)}, (-12,-16)*{x_0},
    {(0,-20) \ar @{{*}-{}} (-2,-25)}, (4,-20)*{x_1},
    {(-1,-23.5) \ar @{{}.{}} (1,-23.5)},
    {(0,-20) \ar @{{*}-{}} (2,-25)},
    {(-16,-20) \ar @{{*}-{}} (-14,-25)}, (-12,-20)*{x_1},
    {(-15,-23.5) \ar @{{}.{}} (-17,-23.5)},
    {(-16,-20) \ar @{{*}-{}} (-18,-25)},
    (-16,-27)*+[o][F-]{F_a},
    (0,-27)*+[o][F-]{F_b},
    (16,-6.5)*+[o][F-]{F},
    {(0,28) \ar @/_2mm/ @{-}_{n_1} (0,20)},
    {(0,12) \ar @/_2mm/ @{-}_{n_m} (0,4)},
    {(0,-12) \ar @/_2mm/ @{-}_{n_b} (0,-20)},
    {(-16,-12) \ar @/_2mm/ @{-}_{n_a-1} (-16,-20)},
  \end{xy} \right) + \afk_{P\Xcal} \left( \begin{xy}
    {(0,28) \ar @{{*}.{*}} (0,24)}, (4,28)*{x_0},
    {(0,24) \ar @{{*}-{*}} (0,20)}, (4,24)*{x_0},
    {(0,20) \ar @{{*}.{*}} (0,12)}, (4,20)*{x_1},
    {(-5,20) \ar @{{}.{}} (-5,12)},
    {(0,12) \ar @{{*}.{*}} (0,8)}, (4,12)*{x_0},
    {(0,8) \ar @{{*}-{*}} (0,4)}, (4,8)*{x_0},
    {(0,4) \ar @{{*}-{*}} (0,-4)}, (4,4)*{x_1},
    {(0,-4) \ar @{{*}-{*}} (0,-12)}, (4,-4)*{x_0},
    {(0,4) \ar @{{*}-{}} (16,-4)},
    {(13,-3.5) \ar @{{}.{}} (14,-3.5)},
    {(0,4) \ar @{{*}-{}} (14,-5)},
    {(0,-4) \ar @{{*}-{*}} (-16,-12)},
    {(0,-12) \ar @{{*}.{*}} (0,-16)}, (4,-12)*{x_0},
    {(-16,-12) \ar @{{*}.{*}} (-16,-16)}, (-12,-12)*{x_0},
    {(0,-16) \ar @{{*}-{*}} (0,-20)}, (4,-16)*{x_0},
    {(-16,-16) \ar @{{*}-{*}} (-16,-20)}, (-12,-16)*{x_0},
    {(0,-20) \ar @{{*}-{}} (2,-25)}, (4,-20)*{x_1},
    {(-1,-23.5) \ar @{{}.{}} (1,-23.5)},
    {(0,-20) \ar @{{*}-{}} (-2,-25)},
    {(-16,-20) \ar @{{*}-{}} (-14,-25)}, (-12,-20)*{x_1},
    {(-15,-23.5) \ar @{{}.{}} (-17,-23.5)},
    {(-16,-20) \ar @{{*}-{}} (-18,-25)},
    (-16,-27)*+[o][F-]{F_a},
    (0,-27)*+[o][F-]{F_b},
    (16,-6.5)*+[o][F-]{F},
    {(0,28) \ar @/_2mm/ @{-}_{n_1} (0,20)},
    {(0,12) \ar @/_2mm/ @{-}_{n_m} (0,4)},
    {(0,-12) \ar @/_2mm/ @{-}_{n_b-1} (0,-20)},
    {(-16,-12) \ar @/_2mm/ @{-}_{n_a} (-16,-20)},
  \end{xy} \right).
  \end{align*}
  By repeatedly applying Lemma \ref{lem arborification} to all black dots above $F_a$ and $F_b$, we obtain the following:
  \begin{align*} 
    & \afk_{P\Xcal}(\sfk^{PN}(Y)) = \\
    & \sum_{i=1}^{n_a} \binom{n_a-i+n_b-1}{n_b-1} \afk_{P\Xcal} \left( \begin{xy}
    {(0,28) \ar @{{*}.{*}} (0,24)}, (4,28)*{x_0},
    {(0,24) \ar @{{*}-{*}} (0,20)}, (4,24)*{x_0},
    {(0,20) \ar @{{*}.{*}} (0,12)}, (4,20)*{x_1},
    {(-5,20) \ar @{{}.{}} (-5,12)},
    {(0,12) \ar @{{*}.{*}} (0,8)}, (4,12)*{x_0},
    {(0,8) \ar @{{*}-{*}} (0,4)}, (4,8)*{x_0},
    {(0,4) \ar @{{*}-{*}} (0,0)}, (4,4)*{x_1},
    {(0,4) \ar @{{*}-{}} (14.5,-4.5)},
    {(13.5,-3.5) \ar @{{}.{}} (14.5,-3.5)},
    {(0,4) \ar @{{*}-{}} (17.5,-4.5)},
    {(0,-8) \ar @{{*}-{*}} (-16,-12)},
    {(0,0) \ar @{{*}.{*}} (0,-4)}, (4,0)*{x_0},
    {(-16,-12) \ar @{{*}.{*}} (-16,-16)}, (-12,-12)*{x_0},
    {(0,-4) \ar @{{*}-{*}} (0,-8)}, (4,-4)*{x_0},
    {(-16,-16) \ar @{{*}-{*}} (-16,-20)}, (-12,-16)*{x_0},
    {(0,-8) \ar @{{*}-{}} (-2,-13)}, (4,-8)*{x_1},
    {(-1,-11.5) \ar @{{}.{}} (1,-11.5)},
    {(0,-8) \ar @{{*}-{}} (2,-13)},
    {(-16,-20) \ar @{{*}-{}} (-14,-25)}, (-12,-20)*{x_1},
    {(-15,-23.5) \ar @{{}.{}} (-17,-23.5)},
    {(-16,-20) \ar @{{*}-{}} (-18,-25)},
    (-16,-27)*+[o][F-]{F_a},
    (0,-15)*+[o][F-]{F_b},
    (16,-6.5)*+[o][F-]{F},
    {(0,28) \ar @/_2mm/ @{-}_{n_1} (0,20)},
    {(0,12) \ar @/_2mm/ @{-}_{n_m} (0,4)},
    {(0,0) \ar @/_2mm/ @{-}_{n_b+n_a-i} (0,-8)},
    {(-16,-12) \ar @/_2mm/ @{-}_{i} (-16,-20)},
    \end{xy} \right)\\
    & + \sum_{i=1}^{n_b} \binom{n_b-i+n_a-1}{n_a-1} \afk_{P\Xcal} \left( \begin{xy}
    {(0,28) \ar @{{*}.{*}} (0,24)}, (4,28)*{x_0},
    {(0,24) \ar @{{*}-{*}} (0,20)}, (4,24)*{x_0},
    {(0,20) \ar @{{*}.{*}} (0,12)}, (4,20)*{x_1},
    {(-5,20) \ar @{{}.{}} (-5,12)},
    {(0,12) \ar @{{*}.{*}} (0,8)}, (4,12)*{x_0},
    {(0,8) \ar @{{*}-{*}} (0,4)}, (4,8)*{x_0},
    {(0,4) \ar @{{*}-{*}} (0,0)}, (4,4)*{x_1},
    {(0,4) \ar @{{*}-{}} (14.5,-4.5)},
    {(13.5,-3.5) \ar @{{}.{}} (14.5,-3.5)},
    {(0,4) \ar @{{*}-{}} (17.5,-4.5)},
    {(0,-8) \ar @{{*}-{}} (-17.5,-16.5)},
    {(-13.5,-15.5) \ar @{{}.{}} (-14.5,-15.5)},
    {(0,-8) \ar @{{*}-{}} (-14.5,-16.5)},
    {(0,-12) \ar @{{*}.{*}} (0,-16)}, (4,-12)*{x_0},
    {(0,0) \ar @{{*}.{*}} (0,-4)}, (4,0)*{x_0},
    {(0,-16) \ar @{{*}-{*}} (0,-20)}, (4,-16)*{x_0},
    {(0,-4) \ar @{{*}-{*}} (0,-8)}, (4,-4)*{x_0},
    {(0,-20) \ar @{{*}-{}} (2,-25)}, (4,-20)*{x_1},
    {(1,-23.5) \ar @{{}.{}} (-1,-23.5)},
    {(0,-20) \ar @{{*}-{}} (-2,-25)},
    {(0,-8) \ar @{{*}-{*}} (0,-12)}, (4,-8)*{x_1},
    (-16,-19)*+[o][F-]{F_a},
    (0,-27)*+[o][F-]{F_b},
    (16,-6.5)*+[o][F-]{F},
    {(0,28) \ar @/_2mm/ @{-}_{n_1} (0,20)},
    {(0,12) \ar @/_2mm/ @{-}_{n_m} (0,4)},
    {(0,-12) \ar @/_2mm/ @{-}_{i} (0,-20)},
    {(0,0) \ar @/_2mm/ @{-}_{n_a+n_b-i} (0,-8)},
    \end{xy} \right).
  \end{align*}
  Rewriting this equation using $\sfk^{PN}$ gives the desired identity, completing the proof.
\end{proof}

We define the error term of a planar rooted tree using its order relation as follows:
\begin{defn}\label{defn error term}
  Let $Y$ be a $\Ycal$-decorated planar rooted tree presented as \eqref{Y-decorated planar rooted tree} and $(a, b)$ the minimal incomparable pair of $Y$. Its {\it error term} $Y^e$ of a $\Ycal$-decorated planar rooted tree $Y$ is defined by
  \begin{align*}
    Y^e =& B_+^{y_{n_1}} \circ \cdots \circ B_+^{y_{n_m}}(B_+^{y_{n_a+n_b}}(F_a\,F_b)\,F) \\
    & - \sum_{i=1}^{n_a-1} \binom{n_a-i+n_b-1}{n_b-1} B_+^{y_{n_1}} \circ \cdots \circ B_+^{y_{n_m}}(B_+^{y_{n_a+n_b-i}}(B_+^{y_i}(F_a)\,F_b)\,F)\\
    & - \sum_{i=1}^{n_b-1} \binom{n_b-i+n_a-1}{n_a-1} B_+^{y_{n_1}} \circ \cdots \circ B_+^{y_{n_m}}(B_+^{y_{n_a+n_b-i}}(B_+^{y_i}(F_b)\,F_a)\,F),
  \end{align*}
  which is an element in $\Hcal_{NBCK}^{P\Ycal}$. If $Y$ is a ladder tree, then there is no minimal incomparable pair of $Y$, and in this case, $Y^e$ is zero.
\end{defn}

We have the following lemma concerning the error term.

\begin{lem}\label{lem error term}
  For any $\Ycal$-decorated planar rooted tree $Y$, the following identity holds:
  $$\afk_{P\Xcal}(\sfk^{PN}(Y) + \sfk^{PN}(Y^e)) = \afk_{P\Xcal}(\sfk^{PN}(Y_{a+b}) + \sfk^{PN}(Y_{b}^{a}) + \sfk^{PN}(Y_{a}^{b})),$$
  where
  \begin{align*}
  Y_{a+b} := & B_+^{y_{n_1}} \circ \cdots \circ B_+^{y_{n_m}}(B_+^{y_{n_a+n_b}}(F_a\,F_b)\,F)\\
  Y_{b}^{a} := & B_+^{y_{n_1}} \circ \cdots \circ B_+^{y_{n_m}}(B_+^{y_{n_a}}(B_+^{y_{n_b}}(F_b)\,F_a)\,F)\\
  Y_{a}^{b} := & B_+^{y_{n_1}} \circ \cdots \circ B_+^{y_{n_m}}(B_+^{y_{n_b}}(B_+^{y_{n_a}}(F_a)\,F_b)\,F).
  \end{align*}
\end{lem}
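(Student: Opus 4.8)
The plan is to reduce everything to the already-established formula of Lemma \ref{lem the simple arborification formula}. Recall that for $Y$ written as in \eqref{Y-decorated planar rooted tree}, that lemma gives
\[
  \afk_{P\Xcal}(\sfk^{PN}(Y)) = \sum_{i=1}^{n_a} \binom{n_a-i+n_b-1}{n_b-1} \afk_{P\Xcal}(\sfk^{PN}(Z_i^a)) + \sum_{i=1}^{n_b} \binom{n_b-i+n_a-1}{n_a-1} \afk_{P\Xcal}(\sfk^{PN}(Z_i^b)),
\]
where $Z_i^a = B_+^{y_{n_1}} \circ \cdots \circ B_+^{y_{n_m}}(B_+^{y_{n_a+n_b-i}}(B_+^{y_i}(F_a)\,F_b)\,F)$ and similarly for $Z_i^b$. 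The point is that in both sums the $i = n_a$ (resp.\ $i = n_b$) term has binomial coefficient $\binom{n_b-1}{n_b-1} = 1$ (resp.\ $\binom{n_a-1}{n_a-1} = 1$), and equals $\afk_{P\Xcal}(\sfk^{PN}(Y_b^a))$ (resp.\ $\afk_{P\Xcal}(\sfk^{PN}(Y_a^b))$), while the remaining terms $1 \le i \le n_a - 1$ (resp.\ $1 \le i \le n_b - 1$) are exactly the terms appearing with a minus sign in the definition of $Y^e$ in Definition \ref{defn error term}. So I would split each sum into its top term plus the rest.

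The second ingredient is that $\afk_{P\Xcal} \circ \sfk^{PN}$ is linear, so
\[
  \afk_{P\Xcal}(\sfk^{PN}(Y^e)) = \afk_{P\Xcal}(\sfk^{PN}(Y_{a+b})) - \sum_{i=1}^{n_a-1} \binom{n_a-i+n_b-1}{n_b-1} \afk_{P\Xcal}(\sfk^{PN}(Z_i^a)) - \sum_{i=1}^{n_b-1} \binom{n_b-i+n_a-1}{n_a-1} \afk_{P\Xcal}(\sfk^{PN}(Z_i^b)),
\]
using that the lead term of $Y^e$ is precisely $Y_{a+b}$. Adding the two displayed equations, the two sums over $1 \le i \le n_a - 1$ and $1 \le i \le n_b - 1$ cancel in pairs, leaving
\[
  \afk_{P\Xcal}(\sfk^{PN}(Y) + \sfk^{PN}(Y^e)) = \afk_{P\Xcal}(\sfk^{PN}(Y_{a+b})) + \afk_{P\Xcal}(\sfk^{PN}(Z_{n_a}^a)) + \afk_{P\Xcal}(\sfk^{PN}(Z_{n_b}^b)),
\]
and it only remains to identify $Z_{n_a}^a$ with $Y_b^a$ and $Z_{n_b}^b$ with $Y_a^b$ as $\Ycal$-decorated planar rooted trees. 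When $i = n_a$ the grafting $B_+^{y_{n_a+n_b-i}}(B_+^{y_i}(F_a)\,F_b) = B_+^{y_{n_b}}(B_+^{y_{n_a}}(F_a)\,F_b)$, which is literally the inner part of $Y_a^b$; symmetrically for the other. So $Z_{n_a}^a = Y_a^b$ and $Z_{n_b}^b = Y_b^a$, matching the claimed right-hand side up to reordering the three summands. For the degenerate case where $Y$ is a ladder tree there is no minimal incomparable pair; then $Y^e = 0$ by convention and the identity is checked directly (or one notes the statement implicitly presupposes the presentation \eqref{Y-decorated planar rooted tree} exists).

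There is essentially no hard step here: the proof is pure bookkeeping once Lemma \ref{lem the simple arborification formula} is in hand. The one place to be careful is the indexing of the binomial coefficients and making sure the top terms $i = n_a$ and $i = n_b$ are correctly matched against $Y_b^a$ and $Y_a^b$ (not swapped), since the roles of $a$ and $b$ are interchanged between the ``which forest sits on top'' data and the ``which exponent'' data. I would write out the $i = n_a$ term of the first sum and the $i = n_b$ term of the second sum explicitly and compare them vertex-by-vertex and decoration-by-decoration with $Y_a^b$ and $Y_b^a$ from the lemma statement, so the reader can see the identification is genuine equality of decorated planar trees and not merely equality after applying $\afk_{P\Xcal} \circ \sfk^{PN}$.
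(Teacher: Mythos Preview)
Your proposal is correct and follows essentially the same route as the paper: apply Lemma~\ref{lem the simple arborification formula}, add $\afk_{P\Xcal}(\sfk^{PN}(Y^e))$ using linearity and the definition of $Y^e$, and observe that all summands cancel except the $i=n_a$, $i=n_b$ terms and the leading $Y_{a+b}$ term. One small expository slip: in your first paragraph you identify the $i=n_a$ term with $Y_b^a$ and the $i=n_b$ term with $Y_a^b$, but your own later computation (correctly) shows $Z_{n_a}^a = Y_a^b$ and $Z_{n_b}^b = Y_b^a$; keep the latter labeling throughout.
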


\begin{proof}
  By Definition \ref{defn error term}, we add the error term $\afk_{P\Xcal}(\sfk^{PN}(Y^e))$ to both sides of the identity in Lemma \ref{lem the simple arborification formula}. Then the left-hand side becomes
  $$\afk_{P\Xcal}(\sfk^{PN}(Y)) + \afk_{P\Xcal}(\sfk^{PN}(Y^e)),$$
  and in the summation on the right-hand side, all terms cancel except for those with $i = n_a$ and $i = n_b$. This implies that the right-hand side becomes
  $$\afk_{P\Xcal}(\sfk^{PN}(Y_{a+b})) + \afk_{P\Xcal}(\sfk^{PN}(Y_{b}^{a})) + \afk_{P\Xcal}(\sfk^{PN}(Y_{a}^{b})),$$
  which complete the proof.
\end{proof}

To construct the map $\sfk^{PT}$, we require the process tree $\pr(Y)$ of contracting arborification of a $\Ycal$-decorated planar rooted tree $Y$.
\begin{defn}\label{defn process tree of contracting arborification}
  The {\it process tree $\pr(Y) = (\pr(Y), \preceq_{\pr(Y)}, \delta_{\pr(Y)})$ of the contracting arborification} $\afk_{P\Ycal}$ of a $\Ycal$-decorated planar rooted tree $Y$ is a $\Hcal_{NBCK}^{P\Ycal}$-decorated planar rooted tree defined recursively by
  $$\pr(Y) = B_+^{Y}(\pr(Y_{a+b})\, \pr(Y_{b}^{a})\, \pr(Y_{a}^{b})),$$
  where $(a, b)$ is the minimal incomparable pair of $Y$ and $Y_{a+b}, Y_{b}^{a}, Y_{a}^{b}$ are defined as in Lemma \ref{lem error term}.
\end{defn}

\begin{eg}
  Consider the $\Ycal$-decorated planar rooted tree
  $$Y = \begin{xy}
    {(-4,-2)*++!R{\scriptstyle \alpha, y_a} \ar @{{*}-{*}} (0,2)*++!D{\scriptstyle \beta, y_b}},
    {(0,2) \ar @{{*}-{*}} (4,-2)*++!L{\scriptstyle \gamma, y_c}}
  \end{xy}.$$
  The three vertices $\alpha, \beta, \gamma$ of $Y$ are decorated by $y_a, y_b, y_c \in \Ycal$, respectively. To describe the process tree $\pr(Y)$ of the contracting arborification of $Y$, we compute the following first.
  $$Y_{\alpha+\gamma} = \begin{xy}
    (0,-2)*++!L{\scriptstyle y_{a+c}},
    (0,2)*++!L{\scriptstyle y_{b}},
    {(0,-2) \ar @{{*}-{*}} (0,2)}
  \end{xy}, Y_{\gamma}^{\alpha} = \begin{xy}
    (0,-4)*++!L{\scriptstyle y_{c}},
    (0,0)*++!L{\scriptstyle y_{a}},
    (0,4)*++!L{\scriptstyle y_{b}},
    {(0,-4) \ar @{{*}-{*}} (0,0)},
    {(0,0) \ar @{{*}-{*}} (0,4)}
  \end{xy}, Y_{\alpha}^{\gamma} = \begin{xy}
    (0,-4)*++!L{\scriptstyle y_{a}},
    (0,0)*++!L{\scriptstyle y_{c}},
    (0,4)*++!L{\scriptstyle y_{b}},
    {(0,-4) \ar @{{*}-{*}} (0,0)},
    {(0,0) \ar @{{*}-{*}} (0,4)}
  \end{xy}.$$
  Thus, the process tree $\pr(Y)$ is given by
  $$\pr(Y) = \begin{xy}
    {(-6,-2)*++!U{\scriptstyle Y_{\alpha+\gamma}} \ar @{{*}-{*}} (0,2)*++!D{\scriptstyle Y}},
    {(0,2) \ar @{{*}-{*}} (6,-2)*++!U{\scriptstyle Y_{\alpha}^{\gamma}}},
    {(0,2) \ar @{{*}-{*}} (0,-2)*++!U{\scriptstyle Y_{\gamma}^{\alpha}}}
  \end{xy}.$$
\end{eg}

We use the process tree to define the following map:
\begin{defn}\label{defn map sPT}
  The linear map $\phi : \Hcal_{NBCK}^{P\Ycal} \rightarrow \Hcal_{NBCK}^{P\Ycal}$ is defined by
  $$\phi(Y) = Y + \sum_{v\in V(pr(Y))}\left(\delta_{\pr(Y)}(v)\right)^e.$$
  Here $\delta^e$ means the error term appearing in Definition \ref{defn error term}.\\
  The map $\sfk^{PT} : \Hcal_{NBCK}^{P\Ycal} \rightarrow \Hcal_{NBCK}^{P\Xcal}$ is defined by
  $$\sfk^{PT}(Y) = \sfk^{PN}\circ \phi(Y).$$
\end{defn}

Since this function involves only the necessary error terms, we state the following lemma for it.
\begin{lem}\label{lem the map sPT}
  The following diagram is commutative.
  $$\begin{tikzcd}
   \Hcal_{NBCK}^{P\Ycal} \arrow[r, "\sfk^{PT}"]  & \Hcal_{NBCK}^{P\Xcal} \arrow[d, "\afk_{P\Xcal}"] \\
  \QQ\langle\Ycal\rangle \arrow[r, "\sfk"] \arrow[u, "\ell_{P\Ycal}"'] & \QQ\langle\Xcal\rangle               
  \end{tikzcd}$$
\end{lem}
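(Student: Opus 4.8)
The plan is to reduce the statement to Lemma \ref{lem map sPN}, which already establishes the analogous commutativity for $\sfk^{PN}$. Since $\sfk^{PT} = \sfk^{PN} \circ \phi$, it suffices to show that the extra term $\phi - \id$ does not affect the composite $\afk_{P\Xcal} \circ \sfk^{PT} \circ \ell_{P\Ycal}$; concretely, I would show that
$$\afk_{P\Xcal}\bigl(\sfk^{PN}(\phi(Y) - Y)\bigr) = 0$$
for every ladder tree $Y$ in the image of $\ell_{P\Ycal}$. Once this is established, for $w = y_{n_1}\cdots y_{n_m}$ we get
$$\afk_{P\Xcal} \circ \sfk^{PT} \circ \ell_{P\Ycal}(w) = \afk_{P\Xcal} \circ \sfk^{PN} \circ \phi(\ell_{P\Ycal}(w)) = \afk_{P\Xcal} \circ \sfk^{PN}(\ell_{P\Ycal}(w)) = \sfk(w),$$
where the last equality is exactly Lemma \ref{lem map sPN}.

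The key observation is that $\ell_{P\Ycal}(w)$ is a ladder tree, so it has \emph{no} minimal incomparable pair. First I would trace through Definition \ref{defn process tree of contracting arborification}: when $Y$ is a ladder tree, the recursion defining $\pr(Y)$ terminates immediately, and $\pr(Y)$ is the single-vertex tree $B_+^Y(\emptyset)$ decorated by $Y$ itself. Hence $V(\pr(Y)) = \{v\}$ with $\delta_{\pr(Y)}(v) = Y$. Then by Definition \ref{defn map sPT},
$$\phi(Y) = Y + \sum_{v \in V(\pr(Y))} \bigl(\delta_{\pr(Y)}(v)\bigr)^e = Y + Y^e.$$
But by Definition \ref{defn error term}, the error term of a ladder tree is zero, so $\phi(\ell_{P\Ycal}(w)) = \ell_{P\Ycal}(w)$, and the claimed reduction is immediate.

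I expect the main (minor) obstacle to be purely bookkeeping: one must verify carefully that $\phi$ restricts to the identity on the subspace spanned by ladder trees, which requires checking that the recursion in Definition \ref{defn process tree of contracting arborification} indeed bottoms out on ladder trees and that the vertex set of the resulting process tree contains only the root decorated by the ladder tree itself (so that the only error term appearing is $Y^e = 0$). There is no analytic or combinatorial difficulty here — the entire content is that $\ell_{P\Ycal}$ lands in the ladder locus, on which $\phi = \id$, after which Lemma \ref{lem map sPN} closes the argument.
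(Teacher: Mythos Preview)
Your proposal is correct and follows essentially the same approach as the paper: both observe that $\ell_{P\Ycal}(w)$ is a ladder tree, so its process tree is the single vertex $B_+^{\ell_{P\Ycal}(w)}(\emptyset)$ and the only error term $Y^e$ vanishes, whence $\phi$ acts as the identity and the statement reduces to Lemma~\ref{lem map sPN}. Your write-up is in fact slightly more explicit than the paper's in unpacking why $\phi = \id$ on ladder trees.
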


\begin{proof}
  Let $y_{n_1}\cdots y_{n_m}$ be an element in $\QQ\langle \Ycal \rangle$.
  Since $\ell_{P\Ycal}(y_{n_1}\cdots y_{n_m})$ is a ladder tree, we have
  $$\pr(\ell_{P\Ycal}(y_{n_1}\cdots y_{n_m})) = B_+^{\ell_{P\Ycal}(y_{n_1}\cdots y_{n_m})}(\emptyset).$$
  Hence, the error term of $\ell_{P\Ycal}(y_{n_1}\cdots y_{n_m})$ vanishes. We obtain
  $$\afk_{P\Xcal} \circ \sfk^{PT} \circ \ell_{P\Ycal}(y_{n_1}\cdots y_{n_m}) = \afk_{P\Xcal} \circ \sfk^{PN} \circ \ell_{P\Ycal}(y_{n_1}\cdots y_{n_m}).$$
  By Lemma \ref{lem map sPN}, 
  $$\afk_{P\Xcal} \circ \sfk^{PN} \circ \ell_{P\Ycal} = \sfk,$$
  we obtain
  $$\afk_{P\Xcal} \circ \sfk^{PN} \circ \ell_{P\Ycal}(y_{n_1}\cdots y_{n_m}) = \sfk(y_{n_1}\cdots y_{n_m}),$$
  which completes the proof.
\end{proof}

The following result is the main theorem of this paper.
\begin{thm}\label{thm Main theorem}
  The following diagram is commutative.
  $$\begin{tikzcd}
   & \Hcal_{NBCK}^{P\Ycal} \arrow[rd, "\sfk^{PN}"] & \\
  \Hcal_{NBCK}^{P\Ycal} \arrow[rr, "\sfk^{PT}"] \arrow[ru, "\phi"] \arrow[d, "\afk_{P\Ycal}"'] & & \Hcal_{NBCK}^{P\Xcal} \arrow[d, "\afk_{P\Xcal}"] \\
  \QQ\langle\Ycal\rangle \arrow[rr, "\sfk"]               &  & \QQ\langle\Xcal\rangle               
  \end{tikzcd}$$
\end{thm}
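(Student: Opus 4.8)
The strategy is to verify commutativity of the outer square by checking the two triangles in the diagram separately, and then combining them with the definition $\sfk^{PT} = \sfk^{PN}\circ\phi$. The bottom-right region asks that $\afk_{P\Xcal}\circ\sfk^{PN}\circ\phi = \sfk\circ\afk_{P\Ycal}$, which is exactly the statement $\afk_{P\Xcal}\circ\sfk^{PT} = \sfk\circ\afk_{P\Ycal}$ once we unfold $\sfk^{PT}$; the upper triangle is then just the definition of $\sfk^{PT}$ and requires nothing. So the whole content is the identity
$$\afk_{P\Xcal}\bigl(\sfk^{PN}(\phi(Y))\bigr) = \sfk\bigl(\afk_{P\Ycal}(Y)\bigr)$$
for every $\Ycal$-decorated planar rooted tree $Y$ (it then extends to all of $\Hcal_{NBCK}^{P\Ycal}$ by multiplicativity, using that all maps involved are algebra morphisms and that $\afk_{P\Xcal}$, $\afk_{P\Ycal}$, $\sfk$ turn products into shuffle/stuffle products compatibly).

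\textbf{Main step: induction on the number of vertices of $Y$.} The base case is a ladder tree, where $Y$ has no minimal incomparable pair, so $\phi(Y) = Y$ and the claim reduces to $\afk_{P\Xcal}\circ\sfk^{PN}(Y) = \sfk\circ\afk_{P\Ycal}(Y)$ on ladders, which is Lemma \ref{lem map sPN} (via $\ell_{P\Ycal}$, since a ladder tree is $\ell_{P\Ycal}$ applied to a word). For the inductive step, write $Y$ as in \eqref{Y-decorated planar rooted tree} with minimal incomparable pair $(a,b)$. The key computation is to combine Lemma \ref{lem error term} with the recursive structure of $\phi$. Unwinding Definition \ref{defn map sPT}, the sum $\sum_{v\in V(\pr(Y))}(\delta_{\pr(Y)}(v))^e$ decomposes, using the recursion $\pr(Y) = B_+^Y(\pr(Y_{a+b})\,\pr(Y_b^a)\,\pr(Y_a^b))$, as
$$Y^e + \sum_{v\in V(\pr(Y_{a+b}))}(\cdots)^e + \sum_{v\in V(\pr(Y_b^a))}(\cdots)^e + \sum_{v\in V(\pr(Y_a^b))}(\cdots)^e,$$
so that $\phi(Y) = Y + Y^e + (\phi(Y_{a+b}) - Y_{a+b}) + (\phi(Y_b^a) - Y_b^a) + (\phi(Y_a^b) - Y_a^b)$. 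Applying $\afk_{P\Xcal}\circ\sfk^{PN}$ and using Lemma \ref{lem error term} to rewrite $\afk_{P\Xcal}(\sfk^{PN}(Y) + \sfk^{PN}(Y^e)) = \afk_{P\Xcal}(\sfk^{PN}(Y_{a+b}) + \sfk^{PN}(Y_b^a) + \sfk^{PN}(Y_a^b))$, the $Y_{a+b}, Y_b^a, Y_a^b$ contributions recombine into $\afk_{P\Xcal}(\sfk^{PN}(\phi(Y_{a+b})) + \sfk^{PN}(\phi(Y_b^a)) + \sfk^{PN}(\phi(Y_a^b)))$. Since $Y_{a+b}, Y_b^a, Y_a^b$ each have fewer incomparable pairs (they resolve the pair $(a,b)$), induction applies — one must check these trees are genuinely simpler in whatever well-founded order is being used — and this equals $\sfk(\afk_{P\Ycal}(Y_{a+b}) + \afk_{P\Ycal}(Y_b^a) + \afk_{P\Ycal}(Y_a^b))$.

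\textbf{Closing the induction.} It remains to see that $\afk_{P\Ycal}(Y) = \afk_{P\Ycal}(Y_{a+b}) + \afk_{P\Ycal}(Y_b^a) + \afk_{P\Ycal}(Y_a^b)$; then applying $\sfk$ to both sides finishes the step. This identity is precisely the contracting-arborification half of Lemma \ref{lem arborification} applied to the pair $(a,b)$ in $Y$: the three terms on the right are exactly the trees obtained by the stuffle-type splitting $L^{y_{n_a}}\circ L^{y_{n_b}} + L^{y_{n_b}}\circ L^{y_{n_a}} + L^{y_{n_a+n_b}}$ at the branching governed by $a$ and $b$, which is what $Y_b^a$, $Y_a^b$, $Y_{a+b}$ encode.

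\textbf{Expected main obstacle.} The delicate point is the bookkeeping in the decomposition of $\sum_{v\in V(\pr(Y))}(\delta_{\pr(Y)}(v))^e$ and the verification that the well-founded induction is legitimate: one needs a measure (e.g. the pair $(\#V(Y), \text{position of the minimal incomparable pair in the lexicographic order})$, or simply the number of pairs in $V(Y)\times V(Y)\setminus\preceq_Y$) that strictly decreases when passing from $Y$ to each of $Y_{a+b}, Y_b^a, Y_a^b$, uniformly and simultaneously for all the nested error terms. Establishing that $\pr(Y)$ is a finite tree — equivalently that this recursion terminates — is the crux; once termination and the strict-decrease property are in hand, the algebra is a routine assembly of Lemma \ref{lem error term} and Lemma \ref{lem arborification} together with the definitions of $\phi$ and $\pr$.
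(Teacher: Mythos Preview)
Your proposal is correct and follows essentially the same approach as the paper: decompose $\phi(Y)$ via the recursion $\pr(Y)=B_+^Y(\pr(Y_{a+b})\,\pr(Y_b^a)\,\pr(Y_a^b))$, apply Lemma~\ref{lem error term} to collapse $Y+Y^e$ into $Y_{a+b}+Y_b^a+Y_a^b$, and then invoke Lemma~\ref{lem arborification} on the $\afk_{P\Ycal}$ side; the paper simply unfolds this recursion all the way down to the leaves of $\pr(Y)$ (which are ladder trees) rather than phrasing it as a one-step induction, but the two are the same argument. One caution: your parenthetical about extending to forests ``by multiplicativity'' is imprecise, since $\sfk$ does \emph{not} intertwine the stuffle and shuffle products; the paper likewise only treats trees explicitly, so this is peripheral, but you should not rely on that compatibility.
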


\begin{proof}
  Let $Y$ be a $\Ycal$-decorated planar rooted tree. By Lemma \ref{lem the map sPT}, the diagram commutes when $Y$ is a ladder tree, that is,
  $$\afk_{P\Xcal} \circ \sfk^{PT}(Y) = \sfk \circ \afk_{P\Ycal}(Y).$$
  To prove the general case, we compute $\sfk^{PT}(Y)$ and obtain
  \begin{align*}
    \sfk^{PT}(Y) = & \sfk^{PN}\circ \phi(Y) = \sfk^{PN}\left(Y + \sum_{v\in V(\pr(Y))}\left(\delta_{\pr(Y)}(v)\right)^e\right) \\
    = & \sfk^{PN}\left(Y + \left(\delta_{\pr(Y)}(r)\right)^e + \sum_{v\in V(\pr(Y))\setminus\{r\}}\left(\delta_{\pr(Y)}(v)\right)^e\right),
  \end{align*}
  where $V(\pr(Y))$ is the vertex set of $\pr(Y)$ and $r$ is the root of $\pr(Y)$.
  By Lemma \ref{lem error term}, we have the identity:
  $$\afk_{P\Xcal}\left(\sfk^{PN}\left(Y + \left(\delta_{\pr(Y)}(r)\right)^e\right)\right) = \afk_{P\Xcal}\left(\sfk^{PN}(Y_{a+b}) + \sfk^{PN}(Y_{b}^{a}) + \sfk^{PN}(Y_{a}^{b})\right),$$
  because $\delta_{\pr(Y)}(r) = Y$.
  On the other hand, by Definition \ref{defn process tree of contracting arborification}, the remaining vertices decompose as
  $$V(\pr(Y))\setminus\{r\} = V(\pr(Y_{a+b})) \cup V(\pr(Y_{b}^{a})) \cup V(\pr(Y_{a}^{b})),$$
  and the decoration maps are compatible:
  $$\delta_{\pr(Y_{a+b})} = \delta_{\pr(Y)}|_{\pr(Y_{a+b})},\ \delta_{\pr(Y_{b}^{a})} = \delta_{\pr(Y)}|_{\pr(Y_{b}^{a})},\ \delta_{\pr(Y_{a}^{b})} = \delta_{\pr(Y)}|_{\pr(Y_{a}^{b})}.$$
  From the above, we obtain
  \begin{align*}
    \afk_{P\Xcal} \circ \sfk^{PT}(Y) = & \afk_{P\Xcal}\left(\sfk^{PN}\left(Y + \left(\delta_{\pr(Y)}(r)\right)^e\right)\right) + \afk_{P\Xcal}\left(\sfk^{PN}\left(\sum_{v\in V(\pr(Y))\setminus\{r\}}\left(\delta_{\pr(Y)}(v)\right)^e\right)\right)\\
    = & \afk_{P\Xcal}\left(\sfk^{PN}\left(Y_{a+b} + Y_{b}^{a} + Y_{a}^{b}\right)\right)\\
    & + \afk_{P\Xcal}\left(\sfk^{PN}\left(\sum_{v\in V(\pr(Y_{a+b})) \cup V(\pr(Y_{b}^{a})) \cup V(\pr(Y_{a}^{b}))}\left(\delta_{\pr(Y_{a+b})}(v)\right)^e\right)\right)\\
    = & \afk_{P\Xcal} \left ( \sfk^{PN}\left(Y_{a+b} + \sum_{v\in V(\pr(Y_{a+b}))}\left(\delta_{\pr(Y_{a+b})}(v)\right)^e \right) \right) \\
    & + \afk_{P\Xcal} \left ( \sfk^{PN}\left(Y_{b}^{a} + \sum_{v\in V(\pr(Y_{b}^{a}))}\left(\delta_{\pr(Y_{b}^{a})}(v)\right)^e \right) \right) \\
    & + \afk_{P\Xcal} \left ( \sfk^{PN}\left(Y_{a}^{b} + \sum_{v\in V(\pr(Y_{a}^{b}))}\left(\delta_{\pr(Y_{a}^{b})}(v)\right)^e \right) \right) \\
    = & \sfk^{PT}(Y_{a+b}) + \sfk^{PT}(Y_{b}^{a}) + \sfk^{PT}(Y_{a}^{b}).
  \end{align*}
  We can recursively use the above equation to obtain
  $$\sfk^{PT}(Y) = \sum_{v\in \text{leaf}(\pr(Y))}\sfk^{PT}(\delta_{\pr(Y)}(v)).$$
  Since $\delta_{\pr(Y)}(v)$ is a ladder tree for any $v\in \text{leaf}(\pr(Y))$, we get
  \begin{align*}
    \afk_{P\Xcal} \circ \sfk^{PT}(Y) =  & \sum_{v\in \text{leaf}(\pr(Y))}\afk_{P\Xcal} \circ \sfk^{PT}(\delta_{\pr(Y)}(v)) \\
    = & \sum_{v\in \text{leaf}(\pr(Y))} \sfk \circ \afk_{P\Ycal}(\delta_{\pr(Y)}(v)) = \sfk \circ \afk_{P\Ycal}(Y),
  \end{align*}
  which proves this theorem.
\end{proof}

We define the following map to answer Manchon's question.
\begin{defn}\label{defn map sT}
  Let $Y$ be a $\Ycal$-decorated rooted tree and $A_Y$ the set of all total order relations that make $Y$ a planar rooted tree.
  The cardinality of the set $A_Y$ is given by 
  $$deg(r)\times \prod_{v\in V(Y)\setminus \text{leaf}(Y)\setminus\{r\}}(deg(v)-1).$$
  The section $\beta_\Ycal$ of $\hat{\alpha}_{\Ycal} : \Hcal_{NBCK}^{P\Ycal} \rightarrow \Hcal_{BCK}^{\Ycal}$ is defined by
  $$\beta_\Ycal(Y) \coloneqq \frac{1}{|A_Y|}\sum_{\alpha \in |A_Y|}Y_\alpha,$$
  where $Y_\alpha$ is the $\Ycal$-decorated planar rooted tree obtained by equipping the $\Ycal$-decorated rooted tree $Y$ with the total order relation $\alpha\in A_Y$. We define the map
  $$\sfk^T \coloneqq \hat{\alpha}_{\Xcal} \circ \sfk^{PT} \circ \beta_\Ycal.$$
\end{defn}

Then we see that the diagram \eqref{diagram} with the above $\sfk^T$ is commutative:
\begin{cor}
  The following diagram is commutative:
  $$\begin{tikzcd}
  & \Hcal_{NBCK}^{P\Ycal} \arrow[rd, "\sfk^{PN}"] & \\
  \Hcal_{NBCK}^{P\Ycal} \arrow[ru, "\phi"] \arrow[dd, "\afk_{P\Ycal}"', bend right] \arrow[rr, "\sfk^{PT}"] & & \Hcal_{NBCK}^{P\Xcal} \arrow[d, "\hat{\alpha}_{\Xcal}"'] \arrow[dd, "\afk_{P\Xcal}", bend left] \\
  \Hcal_{BCK}^{\Ycal} \arrow[d, "\afk_{\Ycal}"] \arrow[rr, "\sfk^T"] \arrow[u, "\beta_\Ycal"']  &   & \Hcal_{BCK}^{\Xcal} \arrow[d, "\afk_{\Xcal}"']\\
  \QQ\langle \Ycal \rangle \arrow[rr, "\sfk"]  &    & \QQ\langle \Xcal \rangle                                           
  \end{tikzcd}$$
\end{cor}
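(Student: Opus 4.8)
The plan is a pure diagram chase: each face of the displayed diagram is already controlled by a result established above, so I would simply identify which statement governs which face and then glue them together. Concretely, the faces are (i) the top triangle $\phi,\sfk^{PN},\sfk^{PT}$; (ii) the large upper region $\phi,\sfk^{PN},\afk_{P\Ycal},\sfk,\afk_{P\Xcal}$, i.e. the square with diagonal $\sfk^{PT}$; (iii) the triangle on the right through $\Hcal_{BCK}^{\Xcal}$; (iv) the triangle on the left through $\Hcal_{BCK}^{\Ycal}$; and (v) the bottom square, which is the reformulated Manchon diagram \eqref{diagram}.

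First I would dispose of the three ``definitional'' faces. Face (i) commutes because $\sfk^{PT}\coloneqq\sfk^{PN}\circ\phi$ by Definition \ref{defn map sPT}. Face (iii) commutes because $\afk_{P\Xcal}=\afk_{\Xcal}\circ\hat{\alpha}_{\Xcal}$ by Definition \ref{defn lift arborification}. For face (iv) I would combine $\afk_{P\Ycal}=\afk_{\Ycal}\circ\hat{\alpha}_{\Ycal}$ (Definition \ref{defn lift arborification}) with the fact that $\beta_\Ycal$ is a section of $\hat{\alpha}_{\Ycal}$ (Definition \ref{defn map sT}), i.e. $\hat{\alpha}_{\Ycal}\circ\beta_\Ycal=\id$, to obtain $\afk_{P\Ycal}\circ\beta_\Ycal=\afk_{\Ycal}\circ\hat{\alpha}_{\Ycal}\circ\beta_\Ycal=\afk_{\Ycal}$.

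Then I would invoke Theorem \ref{thm Main theorem} for face (ii), namely $\afk_{P\Xcal}\circ\sfk^{PT}=\sfk\circ\afk_{P\Ycal}$ (which itself also contains the identity of face (i)). Finally, for face (v) I would compose the preceding identities: since $\sfk^T\coloneqq\hat{\alpha}_{\Xcal}\circ\sfk^{PT}\circ\beta_\Ycal$ by Definition \ref{defn map sT}, applying faces (iii), (ii), (iv) in turn yields
\begin{align*}
\afk_{\Xcal}\circ\sfk^T &= \afk_{\Xcal}\circ\hat{\alpha}_{\Xcal}\circ\sfk^{PT}\circ\beta_\Ycal = \afk_{P\Xcal}\circ\sfk^{PT}\circ\beta_\Ycal \\
&= \sfk\circ\afk_{P\Ycal}\circ\beta_\Ycal = \sfk\circ\afk_{\Ycal}.
\end{align*}
Since every parallel pair of paths in the diagram is built from faces (i)--(v), the commutativity of each of these faces gives the commutativity of the whole diagram.

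I do not anticipate a genuine obstacle here: the real content lies in Theorem \ref{thm Main theorem} (and, beneath it, Lemma \ref{lem error term}), while what remains is bookkeeping. The one point deserving a moment's care is checking that $\beta_\Ycal$ is a well-defined section of $\hat{\alpha}_{\Ycal}$ --- each summand $Y_\alpha$ in the average over $A_Y$ in Definition \ref{defn map sT} is sent to $Y$ by $\hat{\alpha}_{\Ycal}$, so $\hat{\alpha}_{\Ycal}\circ\beta_\Ycal=\id$ after the $\tfrac{1}{|A_Y|}$ normalization, and one extends $\beta_\Ycal$ linearly on the forest basis to all of $\Hcal_{BCK}^{\Ycal}$. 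Everything else is a formal consequence of identities that hold on the nose rather than up to a graded piece.
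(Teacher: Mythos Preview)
Your proposal is correct and follows essentially the same diagram-chase as the paper: identify the definitional faces (top triangle via Definition~\ref{defn map sPT}, center rectangle via Definition~\ref{defn map sT}, right and left triangles via Definition~\ref{defn lift arborification} together with the section property of $\beta_\Ycal$), invoke Theorem~\ref{thm Main theorem} for the outer pentagon, and deduce the bottom Manchon square from these. Your write-up is in fact more explicit than the paper's---you spell out the equality $\afk_{P\Ycal}\circ\beta_\Ycal=\afk_\Ycal$ using $\hat{\alpha}_\Ycal\circ\beta_\Ycal=\id$ and give the chain of equalities for $\afk_\Xcal\circ\sfk^T=\sfk\circ\afk_\Ycal$, both of which the paper leaves implicit.
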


\begin{proof}
  By Definition \ref{defn map sT}, the rectangle in the center is commutative. By Definition \ref{defn arborification}, the left triangle and the right triangle commute. By Theorem \ref{thm Main theorem}, the outer pentagon commutes. From the above, all regions in the diagram commute. Hence, our claim follows.
\end{proof}
$ $\\
\textbf{Acknowledgements} I would like to thank Professor Furusho for the helpful advice and kind support during the writing of this paper.

\end{document}